\newcommand{\e}{{\epsilon}}
\newcommand{\I}{\mathcal{I}}
\newcommand{\C}{\mathbb C}
\newcommand{\ot}{\otimes}
\numberwithin{equation}{section}
\newtheorem{theorem}{Theorem}[section]
\newtheorem{cor}[theorem]{Corollary}
\newtheorem{lemma}[theorem]{Lemma}
\newtheorem{prop}[theorem]{Proposition}
\theoremstyle{definition}
\newtheorem{defn}[theorem]{Definition}
\begin{document}
\title[]
{Generalized Yang-Baxter equations and Braiding Quantum Gates}

\author{Rebecca S. Chen}
\email{rchen@parktudor.org}
\address{Park Tudor School\\
7200 North College Avenue\\
    Indianapolis, IN 46240\\
    U.S.A.}

\thanks{We thank Prof.\ Rowell for suggesting the problem and for sending me his Maple program and his solution.  This project would not exist without his guidance and generous help.}

\begin{abstract}

Solutions to the Yang-Baxter equation - an important equation in mathematics and physics - and their afforded braid group representations have applications in fields such as knot theory, statistical mechanics, and, most recently, quantum information science.  In particular, unitary representations of the braid group are desired because they generate braiding quantum gates.  These are actively studied in the ongoing research into topological quantum computing.  A generalized Yang-Baxter equation was proposed a few years ago by Eric Rowell et al.  By finding solutions to the generalized Yang-Baxter equation, we obtain new unitary braid group representations.  Our representations give rise to braiding quantum gates and thus have the potential to aid in the construction of useful quantum computers.

\end{abstract}

\maketitle

\section{Introduction}

In 1944, L.~Onsager published his now-famous solution of the Ising model in statistical mechanics.  He employed in his solution a clever relation called a star-triangle transformation \cite{PA}, better known today as the Yang-Baxter equation.  Although Onsager never fully realized the significance of the star-triangle transformation, R.~J.~Baxter did, and he demonstrated its importance in the exact solutions of other statistical mechanical models.  C.~N.~Yang, working independently to solve one-dimensional many-body problems with delta-function interactions, came across the same transformation.  The term Yang-Baxter equation was coined by L.~Faddeev in the late 1970s in honor of these two physicists.  Now the Yang-Baxter equation is an important equation in mathematics and physics, connected to a variety of fields such as statistical mechanics, quantum field theory, quantum topology, quantum groups, and, most recently, quantum information science.

The Yang-Baxter equation in dimension $d$ (some would say in dimension $d^2$) is a matrix equation for an invertible complex matrix $R=(R^{kl}_{ij}),i,j,k,l=1,2,\cdots, d$.  The easiest way to write down the Yang-Baxter equation is to use the language of linear operators between vector spaces.  Let $V$ be a $d$-dimensional complex vector space with a chosen basis $\{e_i\},i=1,2,\cdots, d$.
The matrix $R$ defines an invertible operator $R: V\otimes V \rightarrow V\otimes V$ by $R(e_i\otimes e_j)=\sum_{k,l=1}^d R^{kl}_{ij}e_k\otimes e_l, i,j=1,...,d$,
where $\{e_a\otimes e_b\}, a,b=1,..,d$, is a basis of $V\ot V$.  By abusing notation, we have denoted the operator associated to $R$ also by $R$.  Letting $I_V$ be the identity operator on $V$, we form two operators $R\ot I_V$ and $I_V\ot R$ from $V\otimes V \otimes V$ to itself.  Then the Yang-Baxter equation, written as an equation for linear operators, is the following:

\vspace{.1in}
\begin{enumerate}\label{YBE:operatorform}
 \item[(YBE)] $(R\ot I_V)(I_V\ot R)(R\ot I_V)=(I_V\ot R)(R\ot I_V)(I_V\ot R)$.
\end{enumerate}
\vspace{.1in}

When we use the basis $\{e_a\ot e_b \ot e_c\}, a,b,c=1,...,d$, the above equation for linear operators becomes an equation for the matrix $R$.  This matrix equation for $R$ is usually called the Yang-Baxter equation, and a solution is called an $R$-matrix.  The two equations are in one-to-one correspondence when bases of the involved vector spaces are fixed, so we will speak of both as the Yang-Baxter equation with the understanding that some basis has been chosen for the operator equation.  Strictly speaking, the Yang-Baxter equation here is the \emph{braided} version of the constant quantum Yang-Baxter equation, but we refer to it simply as the Yang-Baxter equation for brevity.

The Yang-Baxter equation consists of a set of polynomial equations for the entries $\{R^{kl}_{ij}\}$ of the matrix $R$.  Explicitly, for any choice of two ordered triples $(x,y,z)$ and $(u,v,w)$ from $\{1,2,\cdots, d\}$, we have

\vspace{.1in}
\begin{enumerate}\label{YBE:matrixform}
 \item[(YBE)] $\sum_{a,b,c=1}^d R^{ab}_{uv}R^{cz}_{bw}R^{xy}_{ac}=\sum_{m,n,p=1}^d R^{np}_{vw}R^{xm}_{un}R^{yz}_{mp}$.
\end{enumerate}
\vspace{.1in}

One application of unitary $R$-matrices is to quantum information science.  A unitary $R$-matrix leads to a unitary representation of the braid group, and the resulting unitary matrices associated to braids can be used to process quantum information \cite{Nayak}.  Inspired by such an application, E.~Rowell et al proposed a generalized version of the Yang-Baxter equation in \cite{RZWG}.  Unitary solutions to the generalized Yang-Baxter equation sometimes also afford braid group representations and therefore can be used for quantum information processing.  An $8\times 8$ solution to a generalized Yang-Baxter equation is used to generate the Greenberger-Horne-Zeillinger states \cite{RZWG}.

Solutions to the Yang-Baxter equation are difficult to find.  This can be seen by counting the number of variables and the number of equations.  If the vector space $V$ is of dimension $d$, then $R$ has $d^4$ entries, so there are $d^4$ unknowns.  The Yang-Baxter equation consists of $d^6$ cubic polynomial equations for the $d^4$ variables $\{R^{kl}_{ij}\}$.  For $d=1$, the equation is automatically satisfied by any nonzero complex number.  For $d=2$, the Yang-Baxter equation consists of $64$ cubic homogeneous polynomial equations for $16$ complex unknowns.  This is the only case for which the Yang-Baxter equation has been solved completely, albeit with the help of a computer.  The general solutions are found in \cite{H}, and based on this work, the unitary ones are classified in \cite{Dye}.  The case of $d=3$ consists of  $729$ cubic polynomial equations for $81$ unknowns.  No wonder it has never been completely solved!

Furthermore, new solutions to generalized Yang-Baxter equations are even more challenging to discover.  So far, only two essentially new solutions have been found: the $8\times 8$ unitary solution given in \cite{RZWG} and a unitary solution found by Prof.\ Rowell based on an unpublished work of Prof.\ Goldschmidt and Prof.\ Jones (cf.~\cite{GHR, R}).  We will refer to the latter as the Rowell solution.  In this paper, we generalize the Rowell solution to three families of unitary solutions, each of which is parameterized by the upper semicircle in the complex plane.  Our solutions lead to new unitary braid group representations that generate braiding quantum gates.

Our solutions were found with the help of the computer package Maple.  Modifying a short program written by Prof.\ Rowell, we found the solutions up to an error of order $10^{-11}$.  Then we proved algebraically that they are indeed solutions.

The contents of the paper are as follows.  In Section \ref{gYBE}, we review the generalized Yang-Baxter equations.  Section \ref{solns} contains our main result: certain solutions to a generalized Yang-Baxter equation.  In Section \ref{quantumgates}, we show that
all of our solutions in Section \ref{solns} give rise to unitary representations of the braid group.  In Section \ref{questions}, we list some open problems and directions for future research.

\section{Generalized Yang-Baxter Equations}\label{gYBE}

The Yang-Baxter equation (YBE) is indexed by a single natural number $d$, the dimension of the vector space $V$.
The \emph{generalized Yang-Baxter equation} (gYBE) proposed in \cite{RZWG} is indexed by two more natural numbers, $m$ and $l$.  For convenience, we will continue to use operator language while keeping in mind that, by the choice of a basis of the vector space, the gYBE is a matrix equation.

\begin{defn}
Let $V$ be a complex vector space of dimension $d$.  The $(d,m,l)$-gYBE is an equation for an invertible operator $R: V^{\ot m}\rightarrow V^{\ot m}$ such that
\vspace{.1in}
    \begin{enumerate}\label{gYBE:operatorform}
    \item[(gYBE)] $(R\ot I_V^{\ot l}) (I_V^{\ot l}\ot R) (R\ot I_V^{\ot l})=(I_V^{\ot l}\ot R) (R\ot I_V^{\ot l}) (I_V^{\ot l}\ot R)$,
    \end{enumerate}
\vspace{.1in}
where $d$, $m$, and $l$ are natural numbers and $I_V^{\ot l}$ is the identity operator on $V^{\ot l}$.
Any matrix solution to the $(d,m,l)$-gYBE is called a $(d,m,l)$-$R$-matrix.
\end{defn}

Note that if $m=2$ and $l=1$, the gYBE reduces to the usual YBE.  Generally, however, the gYBE is harder to solve than the YBE, with the exception of a couple of basic cases.  When $d=1$, $R$ is just a scalar, so any non-zero complex number is a solution.  When $m=l=1$, the gYBE becomes the equation $R^2\otimes R=R\otimes R^2$, where $R$ is an invertible operator on $V$.  $R^2\otimes R=R\otimes R^2$ is the same as $R\otimes I=I\otimes R$ because $R$ is invertible.  It follows that $R=\lambda I_V$ for some nonzero scalar $\lambda$, if $m=l=1$.

For the application to quantum information science, we will focus on $d=2$ because, when $d=2$, $V$ is isomorphic to $\mathbb{C}^2$, the so-called qubit state space.  The $(2,2,1)$-gYBE is the YBE in dimension $2$, so the first non-trivial gYBE for qubits is the $(2,3,1)$-gYBE.

\begin{defn}
The $(2,3,1)$-gYBE for  $R: (\mathbb{C}^2)^{\ot 3}\rightarrow (\mathbb{C}^2)^{\ot 3}$ is $$R_1R_2R_1=R_2R_1R_2,$$ where $R_1=R\otimes I_2$ and $R_2=I_2\otimes R$ act on $(\mathbb{C}^2)^{\ot 4}$.
\end{defn}

There are several different conventions regarding the tensor product of matrices.  We will use the so-called Kronecker product: for two matrices $A=(a_{ij})_{m\times n}$ and $B=(b_{kl})_{p\times q}$, $A\otimes B$ is the $(mp\times nq)$-matrix obtained by replacing each entry $a_{ij}$ of $A$ by the block $a_{ij}B$.  We will use $I_n$ to denote the $n\times n$ identity matrix.  When no confusion would result, we will simply write $I_n$ as $I$.

For two matrices $X$ and $Y$,  $X\oplus Y$ denotes the block diagonal matrix
$\begin{pmatrix}
X&0\\0&Y
\end{pmatrix}.$

\begin{prop}\label{gYBE:double}
Suppose $X$ is a $4\times 4$ matrix.  Then $X\oplus X$ is a solution to the $(2,3,1)$-gYBE if and only if $X$ is a solution to the YBE in dimension $2$.
\end{prop}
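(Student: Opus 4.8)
The plan is to exploit the observation that, under the Kronecker product convention fixed above, the block-diagonal matrix $X\oplus X$ is nothing but $I_2\ot X$: replacing each entry of $I_2$ by its multiple of $X$ yields exactly $\begin{pmatrix}X&0\\0&X\end{pmatrix}$. First I would rewrite $R=X\oplus X=I_2\ot X$ and substitute this into the definitions of $R_1$ and $R_2$. Using associativity of the tensor product, this gives $R_1=R\ot I_2=I_2\ot X\ot I_2$ and $R_2=I_2\ot R=I_2\ot I_2\ot X$, both regarded as operators on $(\mathbb{C}^2)^{\ot 4}$. In effect the first tensor factor is a spectator on which $R$ acts trivially throughout.

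The key step is to factor the leading $I_2$ out of every term. Writing $M_1=X\ot I_2$ and $M_2=I_2\ot X$ for the two $8\times 8$ operators acting on the last three tensor factors, we have $R_1=I_2\ot M_1$ and $R_2=I_2\ot M_2$. By the mixed-product property $(A\ot B)(C\ot D)=AC\ot BD$, applied with the first factor always equal to $I_2$, the two sides of the $(2,3,1)$-gYBE collapse to
\[
R_1R_2R_1=I_2\ot (M_1M_2M_1), \qquad R_2R_1R_2=I_2\ot (M_2M_1M_2).
\]
Now $M_1M_2M_1=M_2M_1M_2$ is precisely the operator form (YBE) in dimension $2$ for $X$, since with $V=\mathbb{C}^2$ and $R=X$ the operators $M_1=X\ot I_2$ and $M_2=I_2\ot X$ are exactly $R\ot I_V$ and $I_V\ot R$.

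Finally I would invoke injectivity of the map $A\mapsto I_2\ot A$: because $I_2\ot A$ is the block-diagonal matrix $A\oplus A$, we have $I_2\ot A=I_2\ot B$ if and only if $A=B$. Hence $R_1R_2R_1=R_2R_1R_2$ holds if and only if $M_1M_2M_1=M_2M_1M_2$, which is the claimed equivalence; I would also note in passing that $X\oplus X$ is invertible exactly when $X$ is, so the standing invertibility hypotheses on both equations correspond as well. There is no genuine obstacle beyond careful bookkeeping of the tensor factors and the Kronecker convention; the one point deserving a moment's care is verifying that the spectator $I_2$ in the first slot factors uniformly through all three products in each word, which is exactly what the mixed-product property guarantees.
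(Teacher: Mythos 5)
Your proof is correct and is essentially the paper's argument: the paper proves this proposition inside the proof of Lemma \ref{gYBE:Block}, where $R_1$ and $R_2$ are written as block-diagonal matrices with respect to the first qubit factor and the $16\times 16$ equation is compared block by block, which for $Y=X$ gives exactly your reduction of the gYBE to $(X\ot I)(I\ot X)(X\ot I)=(I\ot X)(X\ot I)(I\ot X)$. Your packaging via $X\oplus X=I_2\ot X$, the mixed-product property, and injectivity of $A\mapsto I_2\ot A$ is a tidier, self-contained way of carrying out that same blockwise computation, specialized from the start to the case $Y=X$.
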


A proof is contained in the proof of Lemma \ref{gYBE:Block}.

As mentioned in the introduction, there currently exist only two essentially new solutions for any gYBE other than the $(d,1,1)$-gYBE.  Up to simple changes described in Prop.\ \ref{YBE:gauge} below, the first solution is a $(2,3,2)$-$R$-matrix in Section $4.2$ of \cite{RZWG}:
$$R_X=\frac{1}{\sqrt{2}}
\begin{pmatrix}
1&0&0&0 &0 & 0&0& 1\\
0&1&0&0 &0 & 0&1& 0\\
0&0&1&0 &0 & 1&0& 0\\
0&0&0&1 &1 & 0&0& 0\\
0&0&0&-1 &1 & 0&0& 0\\
0&0&-1&0 &0 & 1&0& 0\\
0&-1&0&0 &0 & 0&1& 0\\
-1&0&0&0&0&0&0&1
\end{pmatrix}.$$

We will refer to this solution as the $X$-shape solution as the non-zero entries form the shape of an $X$.

The second solution is the Rowell solution, a $(2,3,1)$-$R$-matrix.  Let $\zeta=e^{2\pi i/8}$.  Then,
$$R_{\zeta}=\frac{1}{\sqrt{2}}
\begin{pmatrix}
\zeta^{-1}&0&-\zeta^{-1}&0\\0 &\zeta &0&\zeta\\\zeta&0&\zeta&0\\0&-\zeta^{-1}&0&\zeta^{-1}
\end{pmatrix}\oplus \frac{1}{\sqrt{2}}
\begin{pmatrix}
\zeta&0&\zeta&0\\0 &\zeta^{-1} &0&-\zeta^{-1}\\-\zeta^{-1}&0&\zeta^{-1}&0\\0&\zeta&0&\zeta
\end{pmatrix}.$$

Any solution to the gYBE, just as for the YBE, leads to many more solutions by the following proposition.

\begin{prop}\label{YBE:gauge}
If $R$ is a solution to the $(d,m,l)$-gYBE, then
    \begin{enumerate}
    \item $\lambda R$ is also a solution for any nonzero scalar $\lambda$.
    \item $R^{-1}$ is also a solution.
    \item ${(Q^{-1})}^{\ot m}\cdot R \cdot Q^{\ot m}$ is also a solution, where $Q$ is an invertible $d\times d$ matrix.
    \end{enumerate}
\end{prop}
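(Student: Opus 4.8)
The plan is to rewrite every gYBE in the compact form $R_1R_2R_1=R_2R_1R_2$, where $R_1=R\ot I_V^{\ot l}$ and $R_2=I_V^{\ot l}\ot R$ act on $V^{\ot(m+l)}$, and then check each of the three operations against this single identity. Parts (1) and (2) are routine; part (3) is the substantive case.

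For part (1), I observe that replacing $R$ by $\la R$ simply scales $R_1$ and $R_2$ by $\la$, so both sides of the gYBE acquire a common factor $\la^3$. Since $\la\neq 0$, this factor may be cancelled and the identity is preserved.

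For part (2), I would first note that tensoring with an identity operator commutes with inversion, so that $(R^{-1})_1=R_1^{-1}$ and $(R^{-1})_2=R_2^{-1}$. Taking the inverse of the original equation $R_1R_2R_1=R_2R_1R_2$ reverses the order of the three factors on each side and inverts each of them; since both sides are palindromic in $R_1,R_2$, this yields precisely $R_1^{-1}R_2^{-1}R_1^{-1}=R_2^{-1}R_1^{-1}R_2^{-1}$, which is exactly the gYBE for $R^{-1}$.

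Part (3) is where I expect the real work to lie. Writing $S=(Q^{-1})^{\ot m}R\,Q^{\ot m}$ and setting the single global conjugator $P=Q^{\ot(m+l)}$, the key claim is that $P$ transports \emph{both} generators at once, i.e.\ $S_1=P^{-1}R_1P$ and $S_2=P^{-1}R_2P$. This is exactly the point at which the mixed-product property of the Kronecker product, $(A\ot B)(C\ot D)=AC\ot BD$, is used: grouping $P=Q^{\ot m}\ot Q^{\ot l}$ and sliding these factors past $R_1=R\ot I_V^{\ot l}$ makes the $Q$'s on the last $l$ strands cancel against their inverses, leaving only the conjugation on the first $m$ strands and giving $P^{-1}R_1P=S\ot I_V^{\ot l}=S_1$; regrouping the same $P$ as $Q^{\ot l}\ot Q^{\ot m}$ produces the analogous cancellation against $R_2=I_V^{\ot l}\ot R$ and yields $S_2$. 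The only genuine obstacle is this bookkeeping---verifying that one and the same $P$ simultaneously conjugates $R_1$ to $S_1$ and $R_2$ to $S_2$ despite their different tensor placements. Once the claim is in hand the conclusion is immediate, since the gYBE for $S$ becomes $P^{-1}(R_1R_2R_1)P=P^{-1}(R_2R_1R_2)P$, which follows from the original equation by conjugation.
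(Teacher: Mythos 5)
Your proposal is correct and follows essentially the same route as the paper, which gives no details of its own but simply asserts that the YBE proof in Dye's paper carries over to the gYBE ``without any change'': that proof is precisely the scaling, order-reversal-of-inverses, and conjugation arguments you give, with the third case resting on the same observation that $Q^{\otimes(m+l)}$ simultaneously conjugates $R\otimes I_V^{\otimes l}$ and $I_V^{\otimes l}\otimes R$ via the mixed-product property. In effect you have supplied, in the general $(d,m,l)$ setting, exactly the bookkeeping the paper outsources to the reference (and your part (3) is the step the paper calls ``especially interesting'').
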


The proof for the YBE in \cite{Dye} works for the gYBE without any change.  The proof of the third case is especially interesting.

One of the reasons for interest in the YBE is that any solution leads to a matrix representation of the braid group.  This is not generally true for solutions to the gYBE, a point that we discuss in Section \ref{quantumgates}, though the $X$-shape solution $R_X$ and the Rowell solution $R_{\zeta}$ do lead to braid group representations.

\section{Solutions to the (2,3,1)-Generalized Yang-Baxter Equation}\label{solns}

In this section, we focus on the gYBE for $(d,m,l)=(2,3,1)$ and generalize the only known non-trivial $(2,3,1)$-$R$-matrix: the Rowell solution $R_{\zeta}$.
Our strategy is to search for unitary $(2,3,1)$-$R$-matrices with the non-zero entries in the same positions as in $R_{\zeta}$.  We found that the Rowell solution lies in a one-parameter family and discovered two more families of new unitary solutions.  In Section \ref{quantumgates}, we show that these solutions give rise to unitary braid group representations.

The Rowell solution can be rewritten as
$$R_{\zeta}=\frac{{\zeta^{-1}}}{\sqrt{2}}
\begin{pmatrix}
1&0&-1&0\\0 & i&0& i\\i&0&i&0\\0&-1&0& 1
\end{pmatrix}\oplus \frac{{\zeta^{-1}}}{\sqrt{2}}
\begin{pmatrix}
i&0&i&0\\0 & 1 &0&-1\\-1&0&1&0\\0& i&0& i
\end{pmatrix}.$$

By Prop.\ \ref{YBE:gauge}, the Rowell solution multiplied by $\zeta$ is also a solution.

\begin{defn}
Two $(2,3,1)$-$R$-matrices $R$ and $R'$ are
    \begin{enumerate}
    \item locally conjugate if there is an invertible $2\times 2$ matrix $Q$ such that $R=(Q^{-1}\otimes Q^{-1}\otimes Q^{-1})R'(Q\otimes Q\otimes Q)$.
    \item equivalent if $R$ and $R'$ are related by a sequence of applications of the three relations in Prop.\ \ref{YBE:gauge} (i.e., $R$ can obtained from $R'$ by multiplying a non-zero scalar, by taking the inverse, by a local conjugation, or by an arbitrary combination of these three operations).
    \end{enumerate}
\end{defn}

\begin{defn}
A $4\times 4$ matrix $M$ is
    \begin{enumerate}
    \item $(2\times 2)$-diagonal if $M=
        \begin{pmatrix}
        M_{11}&M_{12}\\
        M_{21}&M_{22}
        \end{pmatrix}$ such that $M_{ij}, i,j=1,2,$ are all diagonal $2\times 2$ matrices.
    \item $(2\times 2)$-diagonally unitary if it is $(2\times 2)$-diagonal and unitary, and $M=\frac{1}{\sqrt{2}}
        \begin{pmatrix}
        M_1&M_2\\
        M_3&M_4
        \end{pmatrix}$ such that each block $M_i, i=1,2,3,4,$ is unitary.
    \end{enumerate}
\end{defn}

Our main result is the following:

\begin{theorem}\label{Thm:Rowell}
If an $8\times 8$ unitary matrix solution $R$ to the $(2,3,1)$-gYBE is of the form $R=X\oplus Y$, where the $4\times 4$ matrix $X$ is $(2\times 2)$-diagonally unitary, then
    \begin{enumerate}
    \item $R$ is equivalent to an $R(\theta)$ in one of the following three families for some $0\leq \theta \leq \pi$:
        \begin{enumerate}
        \item $$R(\theta)=\frac{1}{\sqrt{2}}
            \begin{pmatrix}
            1&0&1 &0\\0 & i&0& e^{i\theta}\\-i&0&i&0\\0&-i e^{-i\theta}&0& 1
            \end{pmatrix}\oplus \frac{1}{\sqrt{2}}
            \begin{pmatrix}
            i&0&e^{i\theta}&0\\0 & 1 &0&-e^{2i\theta}\\-ie^{-i\theta}&0&1&0\\0& ie^{-2i\theta}&0& i
            \end{pmatrix},$$
        \item $$R(\theta)=\frac{1}{\sqrt{2}}
            \begin{pmatrix}
            1&0&1 &0\\0 & i&0& e^{i\theta}\\-1&0&1&0\\0&e^{-i\theta}&0& i
            \end{pmatrix}\oplus \frac{1}{\sqrt{2}}
            \begin{pmatrix}
            i&0&e^{i\theta}&0\\0 & 1 &0&-e^{2i\theta}\\e^{-i\theta}&0&i&0\\0& e^{-2i\theta}&0& 1
            \end{pmatrix},$$
        \item $$R(\theta)=\frac{1}{\sqrt{2}}
            \begin{pmatrix}
            1&0&1 &0\\0 & 1&0& e^{i\theta}\\-1&0&1&0\\0&-e^{-i\theta}&0& 1
            \end{pmatrix}\oplus \frac{1}{\sqrt{2}}
            \begin{pmatrix}
            1&0&-e^{i\theta}&0\\0 & 1 &0&-e^{2i\theta}\\e^{-i\theta}&0&1&0\\0& e^{-2i\theta}&0& 1
            \end{pmatrix}.$$
        \end{enumerate}
    \item Any two different $(2,3,1)$-$R$-matrices in the three families above are not equivalent to each other.
    \item For each $(2,3,1)$-$R$-matrix above, $X$ is different from $Y$ except when $\theta=\pi$ in the third family.  Therefore, neither $X$ nor $Y$ is a solution to the YBE unless for $\theta=\pi$ in the third family.
    \end{enumerate}
\end{theorem}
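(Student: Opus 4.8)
The plan is to handle the three assertions in turn, with the classification (1) carrying the bulk of the work. For (1), I would first use the structural hypothesis to reduce the number of free entries to a minimum. Writing
\[
X=\frac{1}{\sqrt2}\begin{pmatrix} a_1&0&b_1&0\\ 0&a_2&0&b_2\\ c_1&0&d_1&0\\ 0&c_2&0&d_2\end{pmatrix},
\]
the $(2\times 2)$-diagonal pattern decouples the index pairs $\{1,3\}$ and $\{2,4\}$, so unitarity of $X$ is equivalent to unitarity of the two $2\times 2$ blocks $\frac{1}{\sqrt2}\bigl(\begin{smallmatrix} a_i&b_i\\ c_i&d_i\end{smallmatrix}\bigr)$. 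Since every entry then has modulus $1$, each block is a rescaled $2\times 2$ complex Hadamard matrix and is pinned down by a short list of phases subject to one orthogonality relation. I would then spend the three gauge moves of Prop.\ \ref{YBE:gauge} — scalar multiplication and conjugation by $Q^{\ot 3}$ with $Q$ diagonal — to normalize several of these phases to fixed values (this is exactly how the overall factor $\zeta^{-1}$ and the various $i$'s of $R_\zeta$ are absorbed), leaving essentially one genuine parameter.

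Next I would substitute $R=X\oplus Y$ into the gYBE $R_1R_2R_1=R_2R_1R_2$ and expand. Here the block computation behind Lemma \ref{gYBE:Block} (of which Prop.\ \ref{gYBE:double} is the $X=Y$ specialization) is the key tool: it rewrites the $16\times 16$ operator identity as a coupled system in the blocks and, I expect, forces $Y$ to be $(2\times 2)$-diagonal as well while determining its entries as explicit functions of those of $X$. The surviving equations then form a small polynomial system in the remaining phase(s); solving it should produce precisely the three normal forms $R(\theta)$, and a final gauge symmetry (inversion, or conjugation by a suitable $Q$, sending $\theta\mapsto-\theta$) folds the admissible phases onto the upper semicircle $0\le\theta\le\pi$. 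I expect the main obstacle to lie in \emph{completeness} rather than in exhibiting the families: the Maple search only locates solutions numerically, so the real work is to organize the gauge reduction tightly enough that the residual system can be solved in closed form and proved to admit no solutions outside the three listed families.

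For part (2) I would produce invariants of the equivalence relation and show they separate the representatives. Local conjugation by $Q^{\ot 3}$ is a similarity and preserves the spectrum of $R$; scalar multiplication scales all eigenvalues by $\lambda$, and inversion replaces them by their reciprocals. Hence the multiset of pairwise eigenvalue ratios, taken up to simultaneous inversion of every ratio, is an equivalence invariant. Using the block form to reduce the spectral problem to $2\times 2$ pieces, I would compute the eigenvalues of each $R(\theta)$ as functions of $\theta$ and check that this invariant takes distinct values across the three families and is injective in $\theta$ on $[0,\pi]$ within each family, augmenting with finer invariants (such as traces of $R$ against fixed local operators) should the spectrum alone fail to separate some pair; distinct invariants then certify non-equivalence.

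Finally, for part (3) I would read $X$ and $Y$ off the explicit matrices and compare entrywise, where a single distinguishing entry suffices to see $X\neq Y$ for all $\theta$ except $\theta=\pi$ in the third family, at which point the two blocks coincide. The YBE conclusion is then immediate from Prop.\ \ref{gYBE:double}: when $X=Y$ the solution is $X\oplus X$, so $X$ solves the YBE; and conversely the coupling exposed by Lemma \ref{gYBE:Block} shows that a block which solves the YBE forces its partner block to equal it, so $X\neq Y$ rules out either $X$ or $Y$ being a YBE solution in all remaining cases.
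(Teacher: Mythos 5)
Your proposal follows essentially the same route as the paper: it reduces the gYBE to the block system of Lemma \ref{gYBE:Block}, uses that system to force $Y$ as an explicit function of the phases of $X$, solves the residual polynomial system and folds the answer onto $0\le\theta\le\pi$ via the gauge moves of Prop.~\ref{YBE:gauge}, separates families by spectral invariants, and settles part (3) by entrywise comparison together with the fact that the block equations (linear in the partner block, with invertible coefficients) force $X=Y$ whenever either block solves the YBE. The only cosmetic differences are the order of operations (you gauge-normalize before solving, while the paper solves first and then reduces the parameters $\alpha,\beta$ via Lemma \ref{Lemma:reduction}) and your eigenvalue-ratio invariant for part (2), which is slightly more robust than the paper's argument since it also accounts for scalar multiplication and inversion rather than conjugation alone.
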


For $0\leq \theta \leq \pi$, $e^{i\theta}$ is the upper semicircle in the complex plane.  Therefore, solutions in each family correspond to points in the upper semicircle.  For example, the Rowell solution corresponds to the point $i$ in the complex plane; i.e., the Rowell solution is equivalent to $R(\pi/2)$ in the first family.  When $\theta=\pi$ in the third family, $$X(\pi)=Y(\pi)=\frac{1}{\sqrt{2}}
\begin{pmatrix}
1&0&1 &0\\0 & 1&0& -1\\-1&0&1&0\\0& 1&0& 1
\end{pmatrix}$$
is a YBE solution that is locally conjugate to the $R$-matrix corresponding to the Bell states \cite{Dye} \cite{KL}.

\subsection{Proof of Main Theorem \ref{Thm:Rowell}}

We start the proof of Thm.\ \ref{Thm:Rowell} by presenting several lemmas.

In this subsection, we use $R,X,Y,A,B,C,D,Y_i,i=1,2,3,4,$ and $\alpha,\beta,\omega,\gamma,\delta$ to denote the following matrices and matrix entries. The $8\times 8$ matrix $R$ is $X\oplus Y$ for some $4\times 4$ matrices $X$ and $Y$.  The matrices $X$ and $Y$ are written as $X=\frac{1}{\sqrt{2}}
\begin{pmatrix}
A&B\\C&D
\end{pmatrix}$ and $Y=\frac{1}{\sqrt{2}}
\begin{pmatrix}
Y_1&Y_2\\Y_3&Y_4
\end{pmatrix}$.  Let $A=\begin{pmatrix}
1&0\\0&\omega
\end{pmatrix},
B=\begin{pmatrix}
\alpha &0\\0&\beta
\end{pmatrix},
D=\begin{pmatrix}
\gamma &0\\0&\delta
\end{pmatrix}.$
We also denote the identity matrix $I_2$ simply by $I$.

\begin{lemma}\label{Lemma:unitary}
If $X$ is $(2\times 2)$-diagonally unitary, then $C=-DB^{\dagger}A=\begin{pmatrix}-\gamma \bar{\alpha} &0\\0&-\delta \omega \bar{\beta}\end{pmatrix}.$
\end{lemma}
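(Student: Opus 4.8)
The plan is to extract the claimed identity directly from the unitarity of $X$ by working block-by-block. Writing $X=\frac{1}{\sqrt 2}\begin{pmatrix}A&B\\C&D\end{pmatrix}$, I would form $X^\dagger=\frac{1}{\sqrt 2}\begin{pmatrix}A^\dagger&C^\dagger\\B^\dagger&D^\dagger\end{pmatrix}$ and impose $XX^\dagger=I_4$. Multiplying out the $2\times 2$ block product and comparing with $\begin{pmatrix}I&0\\0&I\end{pmatrix}$, the lower-left block gives the single relation $CA^\dagger+DB^\dagger=0$; the $\frac12$ prefactor is irrelevant to this vanishing, so it plays no role.

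Next I would solve this relation for $C$. Since $X$ is $(2\times 2)$-diagonally unitary, each block is unitary; in particular $A$ is unitary, so $A^\dagger$ is invertible with $(A^\dagger)^{-1}=A$. Right-multiplying $CA^\dagger=-DB^\dagger$ by $A$ then yields $C=-DB^\dagger A$, which is the first asserted equality. To obtain the explicit matrix, I would substitute the given diagonal forms $A=\begin{pmatrix}1&0\\0&\omega\end{pmatrix}$, $B=\begin{pmatrix}\alpha&0\\0&\beta\end{pmatrix}$, $D=\begin{pmatrix}\gamma&0\\0&\delta\end{pmatrix}$, use $B^\dagger=\begin{pmatrix}\bar{\alpha}&0\\0&\bar{\beta}\end{pmatrix}$, and compute the product of the three diagonal matrices entrywise to get $C=\begin{pmatrix}-\gamma\bar{\alpha}&0\\0&-\delta\omega\bar{\beta}\end{pmatrix}$.

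There is essentially no hard step here: the content lies entirely in choosing the correct one of the two unitarity conditions. The only point requiring care is to use $XX^\dagger=I$ rather than $X^\dagger X=I$, so that the invertible block $A$ (rather than $D$) appears and can be cancelled to produce $C$ cleanly on the left; had I used $X^\dagger X=I$ I would instead obtain $D^\dagger C=-B^\dagger A$ and would need to invert $D$. A secondary convenience is that all four blocks are diagonal and hence commute, so the entrywise evaluation of $DB^\dagger A$ is immediate and no ordering issues arise. It is worth noting that block unitarity enters only through the invertibility of $A$; the explicit entries of $C$ then follow from the prescribed forms of $A$, $B$, and $D$ alone.
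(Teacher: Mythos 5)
Your proposal is correct and follows essentially the same route as the paper's proof: extract $CA^{\dagger}+DB^{\dagger}=0$ from the lower-left block of $XX^{\dagger}=I_4$, use unitarity of $A$ to replace $(A^{\dagger})^{-1}$ by $A$, and then evaluate $-DB^{\dagger}A$ entrywise for the given diagonal blocks. Your added remarks (on the irrelevance of the $\tfrac{1}{2}$ prefactor and the choice of $XX^{\dagger}=I$ over $X^{\dagger}X=I$) are fine, though the latter choice is only a convenience, since $D$ is also unitary and the alternative relation $D^{\dagger}C=-B^{\dagger}A$ would yield the same conclusion.
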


\begin{proof}
$XX^{\dagger}=I_4$ implies that $CA^{\dagger}+DB^{\dagger}=0$. Thus, $C=-DB^{\dagger}(A^{\dagger})^{-1}$.  If $A$ is unitary, then $(A^{\dagger})^{-1}=A$. A simple computation yields $C$ in terms of $\alpha, \beta, \omega, \gamma, \delta.$

\end{proof}

\begin{lemma}\label{gYBE:Block}
An $8\times 8$ matrix $R=X\oplus Y$ is a solution to the $(2,3,1)$-gYBE if and only if the following equations are satisfied:
    \begin{enumerate}
    \item $(A\otimes I)X (A\otimes I)+(B\otimes I) Y(C\otimes I)=\sqrt{2}\cdot X(A\otimes I)X$
    \item $(A\otimes I)X (B\otimes I)+(B\otimes I) Y(D\otimes I)=\sqrt{2}\cdot X(B\otimes I)Y$
    \item $(C\otimes I)X (A\otimes I)+(D\otimes I) Y(C\otimes I)=\sqrt{2}\cdot Y(C\otimes I)X$
    \item $(C\otimes I)X (B\otimes I)+(D\otimes I) Y(D\otimes I)=\sqrt{2}\cdot Y(D\otimes I)Y$
    \item $(Y_1\otimes I)X (Y_1\otimes I)+(Y_2\otimes I) Y(Y_3\otimes I)=\sqrt{2}\cdot X(Y_1\otimes I)X$
    \item $(Y_1\otimes I)X (Y_2\otimes I)+(Y_2\otimes I) Y(Y_4\otimes I)=\sqrt{2}\cdot X(Y_2\otimes I)Y$
    \item $(Y_3\otimes I)X (Y_1\otimes I)+(Y_4\otimes I) Y(Y_3\otimes I)=\sqrt{2}\cdot Y(Y_3\otimes I)X$
    \item $(Y_3\otimes I)X (Y_2\otimes I)+(Y_4\otimes I) Y(Y_4\otimes I)=\sqrt{2}\cdot Y(Y_4\otimes I)Y$
    \end{enumerate}
\end{lemma}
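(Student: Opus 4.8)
The plan is to peel off the two ``control'' tensor factors of $(\C^2)^{\ot 4}$ one at a time, turning the single $16\times 16$ gYBE into eight $4\times 4$ block identities, at each stage using that two block matrices agree precisely when all their corresponding blocks agree.

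First I would record the key structural fact that makes everything work: since $R=X\oplus Y$ is already block diagonal with respect to the first tensor factor of $(\C^2)^{\ot 3}$, a direct Kronecker computation shows that both $R_1=R\ot I_2$ and $R_2=I_2\ot R$ are block diagonal with respect to the first tensor factor of $(\C^2)^{\ot 4}$, namely $R_1=(X\ot I_2)\oplus(Y\ot I_2)$ and $R_2=R\oplus R$ with $R=X\oplus Y$. Hence $R_1R_2R_1$ and $R_2R_1R_2$ are block diagonal in the same splitting, and the gYBE $R_1R_2R_1=R_2R_1R_2$ is equivalent to the pair of $8\times 8$ equations obtained by fixing the first factor to $0$ and to $1$:
\[
(X\ot I_2)(X\oplus Y)(X\ot I_2)=(X\oplus Y)(X\ot I_2)(X\oplus Y),
\]
\[
(Y\ot I_2)(X\oplus Y)(Y\ot I_2)=(X\oplus Y)(Y\ot I_2)(X\oplus Y).
\]

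Next I would decompose each of these $8\times 8$ equations into $4\times 4$ blocks along the second tensor factor. In that splitting $X\oplus Y$ is diagonal with blocks $X$ and $Y$, while $X=\frac1{\sqrt2}\left(\begin{smallmatrix}A&B\\C&D\end{smallmatrix}\right)$ gives $X\ot I_2=\frac1{\sqrt2}\left(\begin{smallmatrix}A\ot I & B\ot I\\ C\ot I & D\ot I\end{smallmatrix}\right)$, and similarly for $Y\ot I_2$ using the blocks $Y_1,\dots,Y_4$. Expanding the two triple products block by block and equating the four corresponding $4\times 4$ blocks, the first $8\times 8$ equation produces exactly identities (1)--(4) and the second produces (5)--(8). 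The factor $\sqrt2$ on each right-hand side is the bookkeeping residue of the tensor-identity prefactors: the left-hand triple product of each $8\times 8$ equation contains two factors $X\ot I_2$ (or $Y\ot I_2$), contributing an overall $\tfrac12$, whereas the right-hand triple product contains only one, contributing $\tfrac1{\sqrt2}$; clearing the common $\tfrac12$ leaves $\sqrt2$ on the right.

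I expect the only genuine obstacle to be the Kronecker-product bookkeeping rather than any conceptual difficulty: one must verify carefully that $R\ot I_2$ remains block diagonal in the first factor---this is exactly where the hypothesis $R=X\oplus Y$ enters---and that the second-factor block form of $X\ot I_2$ is the one displayed above, with each block $A,B,C,D$ promoted to $A\ot I_2$ and not some reshuffling of the interleaved Kronecker entries. Once both decompositions are pinned down with a fixed convention, the remaining work is mechanical matrix multiplication, and since each reduction above is an equivalence, the lemma follows in both directions. As a byproduct, setting $X=Y$ collapses the two $8\times 8$ equations into the single equation $(X\ot I_2)(I_2\ot X)(X\ot I_2)=(I_2\ot X)(X\ot I_2)(I_2\ot X)$, i.e.\ the YBE in dimension $2$ for $X$, which is precisely Prop.\ \ref{gYBE:double}.
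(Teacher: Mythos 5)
Your proposal is correct and follows essentially the same route as the paper's own proof: splitting the gYBE along the first tensor factor into the two $8\times 8$ equations $(X\ot I)R(X\ot I)=R(X\ot I)R$ and $(Y\ot I)R(Y\ot I)=R(Y\ot I)R$, then comparing $4\times 4$ blocks using $X\ot I=\frac{1}{\sqrt{2}}\bigl(\begin{smallmatrix}A\ot I&B\ot I\\C\ot I&D\ot I\end{smallmatrix}\bigr)$ to obtain equations (1)--(8). Your explicit accounting of the $\sqrt{2}$ prefactor and the remark recovering Prop.~\ref{gYBE:double} by setting $X=Y$ match what the paper does implicitly.
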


\begin{proof}
Recall that the $(2,3,1)$-gYBE is $R_1R_2R_1=R_2R_1R_2$ for $R_1=R\otimes I=\begin{pmatrix}
X\ot I&0\\0& Y\ot I
\end{pmatrix}$ and $R_2=I\otimes R=\begin{pmatrix}
R&0\\0& R
\end{pmatrix}$.
The left-hand side is $$\begin{pmatrix}
X\ot I&0\\0& Y\ot I
\end{pmatrix}
\begin{pmatrix}
R&0\\0& R
\end{pmatrix}
\begin{pmatrix}
X\ot I&0\\0& Y\ot I
\end{pmatrix}
=\begin{pmatrix}
(X\ot I)R(X\ot I)&0\\0& (Y\ot I)R(Y\ot I)
\end{pmatrix},$$
while the right-hand side is $$
\begin{pmatrix}
R&0\\0& R
\end{pmatrix}
\begin{pmatrix}
X\ot I&0\\0& Y\ot I
\end{pmatrix}
\begin{pmatrix}
R&0\\0& R
\end{pmatrix}
 =\begin{pmatrix}
R(X\ot I)R&0\\0& R(Y\ot I)R
\end{pmatrix}.$$
Hence the $(2,3,1)$-gYBE for $R=X\oplus Y$ is equivalent to
    \begin{enumerate}
    \item $ (X\ot I)R(X\ot I)=R(X\ot I)R$,
    \item $(Y\ot I)R(Y\ot I)=R(Y\ot I)R.$
    \end{enumerate}
Substituting $ X\ot I=\frac{1}{\sqrt{2}}\begin{pmatrix}
A\ot I& B\ot I\\C\ot I& D\ot I
\end{pmatrix}$ and $R=\begin{pmatrix}
X&0\\0& Y
\end{pmatrix}$ into the first equation, we get

$$\begin{pmatrix}
A\ot I& B\ot I\\C\ot I& D\ot I
\end{pmatrix}
 \begin{pmatrix}
X&0\\0& Y
\end{pmatrix}
\begin{pmatrix}
A\ot I& B\ot I\\C\ot I& D\ot I
\end{pmatrix}=\sqrt{2}
\begin{pmatrix}
X&0\\0& Y
\end{pmatrix}
\begin{pmatrix}
A\ot I& B\ot I\\C\ot I& D\ot I
\end{pmatrix}
\begin{pmatrix}
X&0\\0& Y
\end{pmatrix}.$$

Comparing the blocks, we obtain the first $4$ equations.  By replacing $A,B,C,D$ with $Y_i,i=1,2,3,4$, respectively, we obtain the
second $4$ equations.

\end{proof}

\begin{lemma}\label{Lemma:YbyX}
Suppose an $8\times 8$ matrix $R=X\oplus Y$ is a solution to the $(2,3,1)$-gYBE such that $X$ is $(2\times 2)$-diagonally unitary.  Then,
    \begin{enumerate}
    \item Equation (1) of Lemma \ref{gYBE:Block} is equivalent to
        \begin{enumerate}
        \item $Y_1=-\bar{\gamma} A(A-\omega D-I)=\begin{pmatrix}\omega &0\\0&\omega \bar{\gamma}(1+\delta \omega -\omega)\end{pmatrix}$,
        \item $Y_2=-\bar{\alpha}\beta\bar{\delta}\bar{\omega}B(A+\omega D-\omega I)=\begin{pmatrix}\beta \bar{\delta}(1-\gamma-\bar{\omega}) &0\\0&-\bar{\alpha}\beta^2\end{pmatrix}$,
        \item $Y_3=\alpha \bar{\beta}\bar{\gamma}B^{\dagger}DA(A+\omega D-\omega I)=\begin{pmatrix}\bar{\beta}(1 +\omega\gamma -{\omega})&0\\0& \alpha\bar{\beta}^2 \delta^2 \omega^2\bar{\gamma}\end{pmatrix}$,
        \item $Y_4=\bar{\delta}\bar{\omega}D(A-\omega D+\omega^2)=\begin{pmatrix}\bar{\delta}\gamma(\omega+\bar{\omega}- \gamma)&0\\0&1-\delta +\omega\end{pmatrix}.$
        \end{enumerate}
    \item Equation (2) of Lemma \ref{gYBE:Block} is equivalent to
        \begin{enumerate}
        \item $(\delta -1)\omega^2+(1+\delta^2-\delta\gamma+\gamma)\omega-2\gamma=0,$
        \item $\delta(2-\bar{\omega}-\gamma)=-\bar{\omega}-\gamma+1+\bar{\omega}\gamma-\gamma^2+\omega\gamma,$
        \item $ (\delta -1)\gamma=\delta^2 -1+ \omega(1-\delta),$
        \item $\delta(2\omega+\bar{\omega}-\gamma)=\bar{\omega}-1+\gamma+\bar{\omega}\gamma+\omega\gamma-\gamma^2.$
        \end{enumerate}
    \item $X$ is a solution to the YBE in dimension $2$ if and only if $X=Y$. 
    \item $Y$ is unitary if and only if
        \begin{enumerate}
        \item $\omega+\bar{\omega}+\gamma+\bar{\gamma}=\omega\gamma+\bar{\omega}\bar{\gamma}+2,$
        \item $\omega+\bar{\omega}+\delta+\bar{\delta}=\omega\delta+\bar{\omega}\bar{\delta}+2,$
        \item $1+\omega+\bar{\omega}+\omega\gamma=\gamma+\bar{\gamma}+\omega^2+\omega\bar{\gamma},$
        \item $2+\omega\delta=\delta+\bar{\delta}+\omega\bar{\delta},$
        \item $\omega+\bar{\omega}+\gamma+\bar{\gamma}+\omega\bar{\gamma}+\bar{\omega}\gamma=4+\omega^2+{\bar{\omega}}^2,$
        \item $\delta+\bar{\delta}+\bar{\omega}\delta+\omega\bar{\delta}=2+\omega+\bar{\omega}.$
        \end{enumerate}
    \item $Y$ is unitary if and only if $\omega, \gamma, \delta$ fall into one of the following three categories:
        \begin{enumerate}
        \item $\omega=\gamma=\pm i, \delta=1$,
        \item $\omega=\delta=\pm i, \gamma=1,$
        \item $\omega=\gamma=\delta=1$.
        \end{enumerate}
    \end{enumerate}
\end{lemma}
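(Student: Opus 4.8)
The plan is to prove the five parts in sequence, using the block reduction of Lemma \ref{gYBE:Block} and the relation $C=-DB^{\dagger}A$ of Lemma \ref{Lemma:unitary} throughout; note that $(2\times 2)$-diagonal unitarity of $X$ forces $|\omega|=|\gamma|=|\delta|=|\alpha|=|\beta|=1$, so $\bar{\omega}=\omega^{-1}$, etc. For part (1), I would read equation (1) of Lemma \ref{gYBE:Block} as a linear equation for $Y$: rewrite it as $(B\otimes I)\,Y\,(C\otimes I)=\sqrt{2}\,X(A\otimes I)X-(A\otimes I)X(A\otimes I)$ and invert the factors $B\otimes I$ and $C\otimes I$ (both invertible, since $B$ is unitary and $C=-DB^{\dagger}A$), so that $Y=(B^{-1}\otimes I)\big[\sqrt{2}\,X(A\otimes I)X-(A\otimes I)X(A\otimes I)\big](C^{-1}\otimes I)$. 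Multiplying out the $2\times 2$ diagonal blocks and substituting $C=-DB^{\dagger}A$ yields the four claimed formulas for $Y_1,\dots,Y_4$. Part (2) is then a direct substitution: plug these expressions for $Y$ into equation (2) of Lemma \ref{gYBE:Block} and compare the entries of the four $2\times 2$ blocks; after clearing the common phases $\alpha,\beta$, the surviving relations simplify to the four identities (a)--(d).

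For part (3), I would invoke Proposition \ref{gYBE:double}. Equation (1) of Lemma \ref{gYBE:Block} determines $Y$ uniquely from $X$ (part (1)), so if $X$ solves the YBE then $X\oplus X$ solves the gYBE, the pair $(X,X)$ satisfies equation (1), and hence $Y=X$ by uniqueness; conversely, if $Y=X$ then $R=X\oplus X$ is a gYBE solution, and Proposition \ref{gYBE:double} makes $X$ a YBE solution. Part (4) follows by writing out $YY^{\dagger}=I_4$: because $Y$ is $(2\times 2)$-diagonal, it decouples into the two $2\times 2$ matrices acting on coordinates $\{1,3\}$ and $\{2,4\}$, and requiring each to be unitary (unit-norm columns, orthogonal columns) gives six scalar equations; the $\alpha,\beta$ phases cancel, leaving (a)--(f) in $\omega,\gamma,\delta$ alone.

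Part (5) is the heart of the matter, and the key realization is that unitarity alone is \emph{not} enough --- one must also use the gYBE constraints (2)(a)--(d), since, for instance, $(\omega,\gamma,\delta)=(i,1,1)$ satisfies all six unitarity equations of part (4) yet fails (2)(a). I would therefore solve both systems simultaneously. First factor the gYBE equation (2)(c) as $(\delta-1)\big(\gamma-(\delta+1-\omega)\big)=0$. In the branch $\delta=1$, equation (2)(a) collapses to $\gamma=\omega$, and unitarity equation (4)(a) becomes $(\omega+\bar{\omega})(\omega+\bar{\omega}-2)=0$, forcing $\omega\in\{1,\pm i\}$; this produces categories (c) and (a). In the branch $\gamma=\delta+1-\omega$ with $\delta\neq 1$, substituting into (2)(a) gives $(\delta-1)\omega^2+2\omega-(\delta+1)=0$, which factors as $(\omega-1)\big[(\delta-1)\omega+(\delta+1)\big]=0$. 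The root $\omega=1$ forces $\gamma=\delta$, but then (4)(c) forces $\delta=1$, a contradiction; the root $\omega=\frac{1+\delta}{1-\delta}$ is purely imaginary, so the constraint $|\omega|=1$ singles out $\delta=\pm i$, whence $\omega=\pm i$ and $\gamma=1$ --- category (b). Finally I would verify the converse by direct substitution, checking that each of the three categories satisfies all of (4)(a)--(f) and (2)(a)--(d).

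The main obstacle is twofold. The first is bookkeeping: parts (1), (2), and (4) require careful, error-prone multiplication of $2\times 2$ diagonal blocks, and one must track which of the equations (2)(a)--(d) and (4)(a)--(f) are actually needed versus automatically satisfied in each branch, so that the case analysis is genuinely exhaustive and the converse verification complete. The second, more conceptual, is the observation in part (5) that the pure-unitarity system has spurious solutions and that the gYBE equation (2) is precisely what removes them; recognizing that the Möbius map $\delta\mapsto\frac{1+\delta}{1-\delta}$ carries the unit circle to the imaginary axis --- so that $|\omega|=1$ holds only at $\delta=\pm i$ --- is the crux that pins down category (b).
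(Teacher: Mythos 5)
Your proposal is correct, and for parts (1)--(4) it coincides with the paper's own proof: the paper likewise solves equation (1) of Lemma \ref{gYBE:Block} for $Y$ as $(B^{\dagger}\otimes I)\bigl[\sqrt{2}\,X(A\otimes I)X-(A\otimes I)X(A\otimes I)\bigr](C^{\dagger}\otimes I)$ (your $B^{-1}$, $C^{-1}$, since the blocks are unitary), obtains part (2) by the same substitution, proves (3) by exactly your uniqueness-plus-Proposition \ref{gYBE:double} argument, and gets (4) from the block form of $YY^{\dagger}=I_4$. Part (5) is where you genuinely diverge, though both arguments are elementary case analyses over the same two systems. The paper pivots on the unitarity equation (4)(d): rewriting it as $2-\delta-\bar{\delta}=\omega(\bar{\delta}-\delta)$, the left side is real, which forces $\omega=\pm i$ or $\delta=1$; then within each of the three cases ($\omega=i$, $\omega=-i$, $\delta=1$) it uses the remaining unitarity equations together with (2)(a) and (2)(c) to pin down $\gamma$ and $\delta$. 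You instead pivot on the gYBE equation (2)(c), factored as $(\delta-1)\bigl(\gamma-(\delta+1-\omega)\bigr)=0$, and in the second branch exploit the factorization $(\omega-1)\bigl[(\delta-1)\omega+(\delta+1)\bigr]=0$ of (2)(a) plus the Cayley-transform observation that $\omega=\frac{1+\delta}{1-\delta}$ is purely imaginary when $|\delta|=1$, so $|\omega|=1$ forces $\delta=\pm i$. Both routes are sound (I checked your algebra: the substitution $\gamma=\delta+1-\omega$ into (2)(a) does yield $(\delta-1)\omega^2+2\omega-(\delta+1)=0$, and the $\omega=1$ root is indeed killed by (4)(c)). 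The paper's split keeps the within-case algebra slightly shorter, while yours makes the logical structure more transparent: your remark that $(\omega,\gamma,\delta)=(i,1,1)$ satisfies all six unitarity equations of (4) yet violates (2)(a) makes explicit a point the paper leaves implicit, namely that the ``if and only if'' in (5) is relative to the standing hypothesis that $R$ solves the gYBE, so the equations of part (2) are indispensable for eliminating spurious unitary solutions; the paper uses them inside its cases without flagging why they are needed.
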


\begin{proof}

For (1), by equation (1) of Lemma \ref{gYBE:Block},
$$Y=(B^{\dagger}\otimes I)[\sqrt{2}\cdot X(A\otimes I)X-(A\otimes I)X(A\otimes I)](C^{\dagger}\otimes I).$$
Substituting $A\ot I=\begin{pmatrix}
I &0\\0& \omega I
\end{pmatrix}$, $B^{\dagger}\otimes I=\begin{pmatrix}
\bar{\alpha} I &0\\0& \bar{\beta}I
\end{pmatrix}$,
$C^{\dagger}\otimes I=\begin{pmatrix}
-\bar{\gamma}\alpha I &0\\0& -\beta \bar{\delta} \bar{\omega} I
\end{pmatrix}$, and $X=\frac{1}{\sqrt{2}}\begin{pmatrix}
A&B\\C&D
\end{pmatrix}$
into the equation, we obtain

$Y=\frac{1}{\sqrt{2}}\begin{pmatrix}
-\bar{\gamma}A^2-\bar{\gamma}\omega BC+\bar{\gamma}A & -\bar{\alpha}\bar{\delta}\beta \bar{\omega}AB-\bar{\alpha}\bar{\delta}\beta BD+\bar{\alpha}\bar{\delta}\beta B \\-\bar{\gamma}\alpha \bar{\beta} CA-\bar{\gamma} {\alpha}\bar{\beta} \omega DC+\bar{\gamma}{\alpha}\bar{\beta} \omega C& -\bar{\delta}\bar{\omega} CB-\bar{\delta} D^2+\bar{\delta}\omega D
\end{pmatrix}.$

The formulas follow from replacing $C$ with $-DB^{\dagger}A$ and simplifying.

For (2), we use equation (2) of Lemma \ref{gYBE:Block}.

For (3), $X$ satisfies the YBE in dimension $2$ if and only if $X$ satisfies the first $4$ equations in Lemma \ref{Lemma:YbyX} when $Y$ is replaced by $X$.  Now $(3)$ follows.

For (4), $YY^{\dagger}=I_4$ is the same as
    \begin{enumerate}
    \item $Y_1Y_1^{\dagger}+Y_2Y_2^{\dagger}=2I,$
    \item $Y_1Y_3^{\dagger}+Y_2Y_4^{\dagger}=0,$
    \item $Y_3Y_1^{\dagger}+Y_4Y_2^{\dagger}=0,$
    \item $Y_3Y_3^{\dagger}+Y_4Y_4^{\dagger}=2I.$
    \end{enumerate}
Substituting $Y_i,i=1,2,3,4$, from above gives the equations.

For (5), by (d) above, we have $2-\delta-\bar{\delta}=\omega(\bar{\delta}-\delta)$.  It follows that $\omega(\bar{\delta}-\delta)$ is a real number. Let $\omega=x+iy, \delta=a+ib$.  Since $(x+iy)(-2ib)$ is real, its imaginary part is $-2xb=0$.  If $x=0$, then $\omega=\pm i$.  If $b=0$, then $2-\delta-\bar{\delta}=0$; hence $\delta=1$.

Case $1$:  $\omega=i$

The equations for $Y$ to be unitary become
    \begin{enumerate}
    \item $(1-i)\gamma+(1+i)\bar{\gamma}=2,$
    \item $(1-i)\delta+(1+i)\bar{\delta}=2.$
    \end{enumerate}
Thus, the real part of $(1-i)\gamma$ is $1$.  When we let $\gamma=c+di$, $c-d=1$.  Since $c^2+d^2=1$, it follows that either $c=1,d=0$ or $c=0,d=1$; i.e., $\gamma=i$ or $\gamma=1$.  Similarly, $\delta=i$ or $\delta=1$.

If $\delta=i$, then equation (c) of (2) implies that $\gamma=\delta+1-i=1.$  Therefore, $\omega=\delta=i, \gamma=1$.

If $\delta=1$, substituting into (a) of (2), we get $\omega=\gamma=i$.  Therefore, $\omega=\gamma=i, \delta=1$.

Case $2$: $\omega=-i$

This case is the complex conjugate of Case $1$.  Therefore, $\omega=\delta=-i, \gamma=1$ or $\omega=\gamma=-i, \delta=1$.

Case $3$: $\delta=1$

By $(a)$ of (2), we have $\omega=\gamma$.  Then the equations for $Y$ to be unitary become
$2(\omega+\bar{\omega})=\omega^2+\bar{\omega}^2+2.$
Hence $\omega+\bar{\omega}=0$ or $2$.  It follows that $\omega=\pm i$ or $\omega=1$.  Therefore, $\omega=\gamma=\pm i, \delta=1$ or $\omega=\gamma=\delta=1$.

\end{proof}

\begin{lemma}\label{Lemma:reduction}
Let $\tilde{R}=\tilde{X}\oplus \tilde{Y}$, with $\tilde{X}$ and $\tilde{Y}$ being the conjugates of $X$ and $Y$ as follows:
$\tilde{X}=\begin{pmatrix}
I &0\\0&B
\end{pmatrix}
X
\begin{pmatrix}
I &0\\0&B^{\dagger}
\end{pmatrix}$ and
$\tilde{Y}=\begin{pmatrix}
I &0\\0&\bar{\alpha}\beta B
\end{pmatrix}
Y
\begin{pmatrix}
I &0\\0& \alpha \bar{\beta} B^{\dagger}
\end{pmatrix}$.  Let $\tilde{A}, \tilde{B}, \tilde{C}, \tilde{D}, \tilde{Y_i},i=1,2,3,4$, denote the
corresponding matrices in $\tilde{R}$.

If $R$ is a solution to the $(2,3,1)$-gYBE and $X$ is $(2\times 2)$-diagonally unitary, then
$\tilde{R}$ is also a solution to the $(2,3,1)$-gYBE with $\tilde{B}$ being the identity matrix $I$.  Conversely, if $\tilde{R}$ is
a solution to the $(2,3,1)$-gYBE such that $\tilde{B}=I$ and $\tilde{X}$ is $(2\times 2)$-diagonally unitary, and $B=\begin{pmatrix}
\alpha &0\\0&\beta
\end{pmatrix}$ is an arbitrary unitary diagonal matrix, then $R=X\oplus Y$, which is similarly obtain from $\tilde{R}$ and $B$, is also a solution.
\end{lemma}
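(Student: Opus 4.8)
The plan is to reduce the statement to the two ``block equations'' extracted in the proof of Lemma \ref{gYBE:Block}, namely that $R=X\oplus Y$ solves the $(2,3,1)$-gYBE if and only if
\[
(X\otimes I)R(X\otimes I)=R(X\otimes I)R \qquad\text{and}\qquad (Y\otimes I)R(Y\otimes I)=R(Y\otimes I)R,
\]
and then to track how these two equations transform under the passage $R\mapsto\tilde R$. First I would observe that the two conjugations defining $\tilde X$ and $\tilde Y$ assemble into a single conjugation $\tilde R=MRM^{-1}$ by the invertible (indeed unitary) \emph{block-diagonal} matrix $M=N_1\oplus N_2$, where $N_1=\left(\begin{smallmatrix}I&0\\0&B\end{smallmatrix}\right)$ and $N_2=\left(\begin{smallmatrix}I&0\\0&\bar\alpha\beta B\end{smallmatrix}\right)$; here $|\alpha|=|\beta|=1$ because $X$ is $(2\times2)$-diagonally unitary, so that $\alpha\bar\beta B^{\dagger}=(\bar\alpha\beta B)^{-1}$ and $B^{\dagger}=B^{-1}$. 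A direct computation with these diagonal blocks gives $\tilde X=\frac{1}{\sqrt 2}\left(\begin{smallmatrix}A&I\\BC&D\end{smallmatrix}\right)$, so that $\tilde B=I$; since $N_1$ is unitary and $X$ is $(2\times2)$-diagonally unitary, $\tilde X$ is again $(2\times2)$-diagonally unitary (its four diagonal blocks $A,\,I,\,BC,\,D$ are unitary). It is crucial to note that a \emph{global} conjugation does not in general preserve the gYBE, so the argument cannot simply invoke Prop.\ \ref{YBE:gauge}; instead it must use the special form of $M$.

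The technical heart is the pair of operator identities
\[
M(X\otimes I)M^{-1}=(P\otimes I_4)(\tilde X\otimes I)(P\otimes I_4)^{-1},\qquad M(Y\otimes I)M^{-1}=(P'\otimes I_4)(\tilde Y\otimes I)(P'\otimes I_4)^{-1},
\]
where $P=\Diag(\alpha,1)$ and $P'=\Diag(\beta,1)$ act on the first tensor factor. To prove them I would write $X\otimes I$ and $Y\otimes I$ in their $2\times 2$ block form along the first tensor factor and conjugate block-by-block by $M=N_1\oplus N_2$. Because every block of $X$ and of $Y$ is a diagonal $2\times2$ matrix (Lemma \ref{Lemma:unitary} and Lemma \ref{Lemma:YbyX}(1)), the two diagonal blocks are left unchanged, i.e.\ equal to the corresponding blocks of $\tilde X\otimes I$ and $\tilde Y\otimes I$, while each off-diagonal block is multiplied by a \emph{single} scalar --- $\alpha^{\pm1}$ in the case of $X$ and $\beta^{\pm1}$ in the case of $Y$ --- independent of the third-factor index. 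That uniform scalar is exactly the effect of conjugating $\tilde X\otimes I$ (resp.\ $\tilde Y\otimes I$) by $P\otimes I_4$ (resp.\ $P'\otimes I_4$), which multiplies the upper off-diagonal block by $\alpha$ (resp.\ $\beta$) and the lower one by its inverse.

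With the identities in hand the forward direction is immediate. Conjugating the first block equation for $R$ by $M$ turns every $R$ into $\tilde R$ and every $X\otimes I$ into $M(X\otimes I)M^{-1}=(P\otimes I_4)(\tilde X\otimes I)(P\otimes I_4)^{-1}$. Since $P\otimes I_4=\Diag(\alpha I_4,I_4)$ is scalar on each first-factor block, it commutes with $\tilde R=\tilde X\oplus\tilde Y$; hence the inner conjugating factor reduces $K^{-1}\tilde R K$ to $\tilde R$ and the outermost $P\otimes I_4$ and its inverse cancel, leaving precisely $(\tilde X\otimes I)\tilde R(\tilde X\otimes I)=\tilde R(\tilde X\otimes I)\tilde R$. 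The second block equation is handled identically with $P'$. Thus $R$ solves the $(2,3,1)$-gYBE if and only if $\tilde R$ does, and $\tilde B=I$. For the converse I would run the same chain of equivalences backwards: the relation $\tilde R=MRM^{-1}$ is an explicit invertible conjugation, and if $\tilde X$ is $(2\times2)$-diagonally unitary and $B$ is unitary diagonal, then $X=N_1^{-1}\tilde X N_1$ is again $(2\times2)$-diagonally unitary, so Lemma \ref{Lemma:unitary} and the identities above apply verbatim to conclude that $R$ solves the gYBE.

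The step I expect to be the main obstacle is establishing the two operator identities with a \emph{uniform} scalar on each off-diagonal block. The subtlety is bookkeeping: the same $4\times4$ matrix enters once as a block of $R$ (acting on the second and third tensor factors) and once inside $X\otimes I$ (acting on the first and second factors), and the conjugating entries of $N_1$ and $N_2$ depend on two of the three indices. One must check that the phase acquired by an off-diagonal block is the same for both values of the untouched third-factor index --- this is precisely where the diagonality of all blocks of $X$ and $Y$ and the unit-modulus conditions $|\alpha|=|\beta|=1$ are indispensable; without them the ``scalar'' would depend on that index and could not be absorbed into a first-factor conjugation.
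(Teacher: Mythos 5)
Your proof is correct; I checked the two intertwining identities entrywise and they do hold, precisely because every block of $X$ is diagonal (Lemma \ref{Lemma:unitary}) and every block of $Y$ is diagonal (Lemma \ref{Lemma:YbyX}(1), which is available since $R$ is assumed to be a solution), with $|\alpha|=|\beta|=1$ making the off-diagonal phases uniform; and $P\ot I_4$, $P'\ot I_4$ indeed commute with $\tilde R$ since they are scalar on each block of the first-factor decomposition. Your route differs from the paper's mainly in organization. The paper computes $\tilde A=A$, $\tilde B=I$, $\tilde C=-DA$, $\tilde D=D$, $\tilde Y_2=\alpha\bar\beta B^\dagger Y_2$, $\tilde Y_3=\bar\alpha\beta BY_3$, etc., substitutes these into the eight $4\times 4$ equations of Lemma \ref{gYBE:Block}, and verifies one equation at a time (the first in detail, the rest declared similar) that conjugating by $\left(\begin{smallmatrix}I&0\\0&B^\dagger\end{smallmatrix}\right)$ and pulling out the scalars via $\left(\begin{smallmatrix}I&0\\0&\bar\alpha\beta I\end{smallmatrix}\right)=\bar\alpha(B\ot I)$ recovers the corresponding equation for $R$. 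You instead work one level higher, with the two $8\times 8$ equations $(X\ot I)R(X\ot I)=R(X\ot I)R$ and $(Y\ot I)R(Y\ot I)=R(Y\ot I)R$ extracted in the proof of Lemma \ref{gYBE:Block}, and package the whole reduction as the global conjugation $\tilde R=MRM^{-1}$ plus two intertwining identities whose residual phase is a first-factor conjugation invisible to $\tilde R$. The underlying mechanism is the same in both proofs (conjugation by block-diagonal matrices built from $B$, commutation of diagonal blocks, absorption of unit-modulus scalars), but your packaging replaces the eightfold case check by a one-line formal argument that covers both directions simultaneously, and it makes explicit two points the paper leaves implicit: that Prop.\ \ref{YBE:gauge}(3) cannot be invoked because $M$ is not of the form $Q^{\ot 3}$, and that diagonality of the $Y_i$ (hence the solution hypothesis, through Lemma \ref{Lemma:YbyX}(1)) is genuinely needed -- it is already required to justify the paper's own formulas $\tilde Y_2=\alpha\bar\beta B^{\dagger}Y_2$ and $\tilde Y_4=Y_4$.
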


\begin{proof}
Direct computation shows that
$$\tilde{X}=\frac{1}{\sqrt{2}}\begin{pmatrix}
A &I\\
-DA& D
\end{pmatrix},
\tilde{Y}=\frac{1}{\sqrt{2}}\begin{pmatrix}
Y_1 &\alpha \bar{\beta} B^{\dagger} Y_2\\
\bar{\alpha} \beta B Y_3& Y_4
\end{pmatrix}.$$  Therefore,
$$\tilde{A}=A, \tilde{B}=I, \tilde{C}=-DA, \tilde{D}=D,$$ $$\tilde{Y_1}=Y_2, \tilde{Y_2}=\alpha \bar{\beta} B^{\dagger} Y_2,
\tilde{Y_3}=\bar{\alpha} \beta B Y_3, \tilde{Y_4}=Y_4.$$

Replacing all matrices in Lemma \ref{gYBE:Block} by the $\tilde{R}$ matrices using the above identities,
we find that $\tilde{R}$ is a solution to the $(2,3,1)$-gYBE if and only if
    \begin{enumerate}
    \item $(A\otimes I)\tilde{X} (A\otimes I)+\tilde{Y} ((-DA)\otimes I)=\sqrt{2}\cdot \tilde{X}(A\otimes I)\tilde{X}$
    \item $(A\otimes I)\tilde{X}+\tilde{Y}(D\otimes I)=\sqrt{2}\cdot \tilde{X}\tilde{Y}$
    \item $((-DA)\otimes I)\tilde{X} (A\otimes I)+(D\otimes I) \tilde{Y}((-DA)\otimes I)=\sqrt{2}\cdot \tilde{Y}((-DA)\otimes I)\tilde{X}$
    \item $((-DA)\otimes I)\tilde{X}+(D\otimes I) \tilde{Y}(D\otimes I)=\sqrt{2}\cdot \tilde{Y}(D\otimes I)\tilde{Y}$
    \item $(Y_1\otimes I)\tilde{X} (Y_1\otimes I)+((\alpha \bar{\beta} B^{\dagger} Y_2)\otimes I) \tilde{Y}((\bar{\alpha} \beta B Y_3)\otimes I)=\sqrt{2}\cdot \tilde{X}(Y_1\otimes I)\tilde{X}$
    \item $(Y_1\otimes I)\tilde{X} ((\alpha \bar{\beta} B^{\dagger} Y_2)\otimes I)+((\alpha \bar{\beta} B^{\dagger} Y_2)\otimes I) \tilde{Y}(Y_4\otimes I)=\sqrt{2}\cdot \tilde{X}((\alpha \bar{\beta} B^{\dagger} Y_2)\otimes I)\tilde{Y}$
    \item $((\bar{\alpha} \beta B Y_3)\otimes I)\tilde{X} (Y_1\otimes I)+(Y_4\otimes I) \tilde{Y}((\bar{\alpha} \beta B Y_3)\otimes I)=\sqrt{2}\cdot \tilde{Y}((\bar{\alpha} \beta B Y_3)\otimes I)\tilde{X}$
    \item $((\bar{\alpha} \beta B Y_3)\otimes I)\tilde{X} ((\alpha \bar{\beta} B^{\dagger} Y_2)\otimes I)+(Y_4\otimes I) \tilde{Y}(Y_4\otimes I)=\sqrt{2}\cdot \tilde{Y}(Y_4\otimes I)\tilde{Y}$
    \end{enumerate}
It suffices to show that the above $8$ equations are the same as the $8$ equations of Lemma \ref{gYBE:Block}.

The $8$ cases are similar, so we show only the first one in detail.  Substituting $\tilde{X}$ and $\tilde{Y}$ into the first equation, we have
$$(A\otimes I)
\begin{pmatrix}
I &0\\0&B
\end{pmatrix}
X
\begin{pmatrix}
I &0\\0&B^{\dagger}
\end{pmatrix}
 (A\otimes I)+
 \begin{pmatrix}
I &0\\0&\bar{\alpha}\beta B
\end{pmatrix}
Y
\begin{pmatrix}
I &0\\0& \alpha \bar{\beta} B^{\dagger}
\end{pmatrix}
 ((-DA)\otimes I)$$
 $$=\sqrt{2}\cdot
 \begin{pmatrix}
I &0\\0&B
\end{pmatrix}
X
\begin{pmatrix}
I &0\\0&B^{\dagger}
\end{pmatrix}
 (A\otimes I)
 \begin{pmatrix}
I &0\\0&B
\end{pmatrix}
X
\begin{pmatrix}
I &0\\0&B^{\dagger}
\end{pmatrix}.$$

Multiplying the equation from the left by $\begin{pmatrix}
I &0\\0&B^{\dagger}
\end{pmatrix}$ and from the right by $\begin{pmatrix}
I &0\\0&B
\end{pmatrix}$ and noticing that $\begin{pmatrix}
I &0\\0&B^{\dagger}
\end{pmatrix}$, $\begin{pmatrix}
I &0\\0&B
\end{pmatrix}$, and $A\ot I$ all commute with one another, we obtain
$$(A\ot I) X(A\ot I)+
\begin{pmatrix}
I &0\\0&\bar{\alpha}\beta I
\end{pmatrix}
Y
\begin{pmatrix}
I &0\\0&{\alpha}\bar{\beta} I
\end{pmatrix}
(-DA\ot I)=\sqrt{2} X(A\ot I)X.$$

Since $\begin{pmatrix}
I &0\\0&\bar{\alpha}\beta I
\end{pmatrix}=\bar{\alpha}
\begin{pmatrix}
\alpha I &0\\0&{\beta} I
\end{pmatrix} =\bar{\alpha} (B\ot I)$ and
$\begin{pmatrix}
I &0\\0&{\alpha}\bar{\beta} I
\end{pmatrix}={\alpha}
\begin{pmatrix}
\bar{\alpha} I &0\\0&\bar{\beta} I
\end{pmatrix} ={\alpha} (B^{\dagger} \ot I)$, we have
$$(A\ot I)X(A\ot I)+(B\ot I)Y(-B^{\dagger}DA\ot I)=\sqrt{2}X(A\ot I)X,$$
which is equation (1).

\end{proof}

In particular, Lemma \ref{Lemma:reduction} shows that any  solution $R=X\oplus Y$ with $X$ $(2\times 2)$-diagonally unitary can be recovered from a solution with $B=I$ and the values $\alpha$ and $\beta$. In the unitary case we have the following:

\begin{lemma}\label{Lemma:3solutions}
The matrices $R=X\oplus Y$ for $\alpha=\beta=\delta=1, \omega=\gamma=i$; $\alpha=\beta=\gamma=1, \omega=\delta=i$; and $\alpha=\beta=\omega=\gamma=\delta=1$ are the following three unitary solutions, respectively, to the $(2,3,1)$-gYBE:
    \begin{enumerate}
    \item $$R=\frac{1}{\sqrt{2}} \begin{pmatrix}1&0&1 &0\\0 & i&0& 1 \\-i &0&i&0\\0&-i  &0& 1\end{pmatrix}\oplus \frac{1}{\sqrt{2}} \begin{pmatrix}i&0&1 &0\\0 & 1 &0&-1\\-i &0&1&0\\0& i &0& i\end{pmatrix}.$$
    \item $$R=\frac{1}{\sqrt{2}} \begin{pmatrix}1&0&1 &0\\0 & i&0& 1 \\-1 &0&1&0\\0&1&0& i\end{pmatrix}\oplus \frac{1}{\sqrt{2}} \begin{pmatrix}i&0&1 &0\\0 & 1 &0&-1\\1&0&i&0\\0& 1 &0& 1\end{pmatrix}.$$
    \item $$R=\frac{1}{\sqrt{2}} \begin{pmatrix}1&0&1 &0\\0 & 1&0& 1 \\-1&0&1&0\\0&-1&0& 1\end{pmatrix}\oplus \frac{1}{\sqrt{2}} \begin{pmatrix}1&0&-1&0\\0 & 1 &0&-1\\1 &0&1&0\\0& 1 &0& 1\end{pmatrix}.$$
    \end{enumerate}
\end{lemma}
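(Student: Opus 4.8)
The plan is to treat this as a direct verification for each of the three parameter triples, organized so that the earlier lemmas do most of the work and only scalar identities remain. The crucial observation is that with $\alpha=\beta=1$ (so $B=I$) and $|\omega|=|\gamma|=|\delta|=1$, every matrix appearing in the block equations of Lemma \ref{gYBE:Block}---namely $X$, $Y$, $A\ot I$, $B\ot I$, $C\ot I$, $D\ot I$, and the $Y_i\ot I$---is $(2\times 2)$-diagonal, and the set of $(2\times 2)$-diagonal matrices is closed under multiplication (each such matrix splits as a direct sum of two $2\times 2$ blocks on the odd- and even-indexed coordinates). Consequently each of the eight block equations is an identity between $(2\times 2)$-diagonal matrices and decouples into a pair of scalar identities in $\omega,\gamma,\delta$.

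First I would assemble $X$ and $Y$ from the parameters. From Lemma \ref{Lemma:unitary}, with $B=I$ we have $C=-DA$, so $X=\frac{1}{\sqrt 2}\begin{pmatrix}A&I\\-DA&D\end{pmatrix}$ is read off directly, and substituting each triple reproduces the displayed $X$. For $Y$, I would substitute $\omega,\gamma,\delta$ into the four formulas of Lemma \ref{Lemma:YbyX}(1) and check that the resulting $Y_1,Y_2,Y_3,Y_4$ are exactly the displayed blocks. Because those formulas were obtained by solving equation (1) of Lemma \ref{gYBE:Block} for $Y$, this step simultaneously confirms that equation (1) holds for each candidate. Next I would dispatch unitarity: $X$ is $(2\times 2)$-diagonally unitary because $A$, $B=I$, $D$ are unitary diagonal matrices and $C=-DB^{\dagger}A$ forces the off-diagonal cancellation $CA^{\dagger}+DB^{\dagger}=0$; and $Y$ is unitary by Lemma \ref{Lemma:YbyX}(5), since the three triples are precisely its categories (a), (b), (c) (taking the $+i$ branch). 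Hence $R=X\oplus Y$ is unitary.

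It then remains to verify the gYBE, i.e.\ equations (2)--(8) of Lemma \ref{gYBE:Block}, equation (1) being already in hand. Equation (2) reduces to the four polynomial identities of Lemma \ref{Lemma:YbyX}(2)(a)--(d), which I would check hold for each triple by inspection. For equations (3)--(8) I would multiply out the relevant $(2\times 2)$-diagonal products and compare the two scalar entries on each side; by the closure remark above this is bookkeeping rather than genuine matrix algebra. I do not expect a conceptual obstacle here: the content of the lemma is sufficiency, converting the necessary conditions extracted in Lemmas \ref{gYBE:Block} and \ref{Lemma:YbyX} into genuine solutions. The one place requiring care---and the most likely source of error---is confirming equations (3)--(8), since these were never used to constrain the parameters, so their validity for all three triples is exactly what must be certified; the $(2\times 2)$-diagonal reduction is what keeps this verification finite and routine.
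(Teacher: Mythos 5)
Your proposal is correct, and at the top level it follows the same route as the paper: both reduce the claim to verifying the eight block equations of Lemma \ref{gYBE:Block} for the three candidate matrices. The difference is in how that verification is organized. The paper's proof is brute force---it checks all eight equations by a long matrix computation (by Maple and by hand) and does not reuse the surrounding lemmas. You instead recycle the machinery already developed: matching the displayed blocks against the formulas of Lemma \ref{Lemma:YbyX}(1) settles equation (1), the four scalar identities of Lemma \ref{Lemma:YbyX}(2) settle equation (2), unitarity of $Y$ comes from Lemma \ref{Lemma:YbyX}(4)/(5), and only equations (3)--(8) require honest computation, which your odd/even decoupling of $(2\times 2)$-diagonal matrices reduces to $2\times 2$ arithmetic. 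This buys a hand-checkable argument and makes visible exactly which equations are genuinely unconstrained by the earlier analysis (namely (3)--(8)); the paper's brute force buys uniformity and independence from the earlier lemmas.

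Two points need care. First, Lemma \ref{Lemma:YbyX} is stated under the hypothesis that $R$ is \emph{already} a solution to the $(2,3,1)$-gYBE, so quoting its statement in order to prove that $R$ is a solution would be circular. What you actually need---and what your parenthetical remark about ``solving equation (1) for $Y$'' shows you implicitly use---is that the \emph{proof} of that lemma establishes the equivalences (equation (1) $\Leftrightarrow$ the formulas for $Y_1,\dots,Y_4$; given those, equation (2) $\Leftrightarrow$ identities (a)--(d); and $Y$ unitary $\Leftrightarrow$ the three parameter categories) assuming only that $X$ is $(2\times 2)$-diagonally unitary, the key facts being that $B\otimes I$ and $C\otimes I$ are invertible so that equation (1) determines $Y$ uniquely and reversibly. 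A complete write-up should state this explicitly rather than cite the lemma as worded. Second, a $(2\times 2)$-diagonal $4\times 4$ matrix decouples, under the permutation grouping indices $\{1,3\}$ and $\{2,4\}$, into a pair of \emph{full} $2\times 2$ blocks; hence each block equation in (3)--(8) splits into a pair of $2\times 2$ matrix identities (eight scalar equations), not ``a pair of scalar identities.'' This affects only the bookkeeping count, not the soundness of the method.
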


We have not included the solutions for $\alpha=\beta=\delta=1, \omega=\gamma=i$, and $\alpha=\beta=\gamma=1, \omega=\delta=i$, which are merely the conjugates of solutions (1) and (2), respectively.

\begin{proof}
It suffices to show that the three matrices satisfy the equations $(1)-(8)$ in Lemma \ref{gYBE:Block}.

The verification is a long matrix algebra computation.  This can be done using a software package such as Maple.  To do the computation algebraically, we notice that the verification is equivalent to checking 8 sets of matrix equations. We have done both the Maple verification and the algebraic computation.
\end{proof}

\begin{lemma}\label{Lemma:generalsolutions}
The following are true:
    \begin{enumerate}
    \item For each $(2,3,1)$-$R$-matrix $R=X\oplus Y$ in Lemma \ref{Lemma:3solutions}, $X$ and $Y$ are conjugate to each other.  Their common eigenvalues are $\{e^{-\frac{\pi i}{12}},e^{-\frac{\pi i}{12}},e^{\frac{7\pi i}{12}},e^{\frac{7\pi i}{12}}\}$ for the first family, $\{e^{-\frac{\pi i}{4}},-e^{-\frac{\pi i}{4}},e^{\frac{\pi i}{4}},e^{\frac{\pi i}{4}}\}$ for the second family, and $\{e^{-\frac{\pi i}{4}},e^{-\frac{\pi i}{4}},e^{\frac{\pi i}{4}},e^{\frac{\pi i}{4}}\}$ for the third family.  It follows that any two different $(2,3,1)$-$R$-matrices above are not conjugate to each other.
    \item For any $\alpha, \beta$ such that $|\alpha|=1, |\beta|=1$, the matrix $R(\alpha,\beta)$ in each family below is a unitary solution to the $(2,3,1)$-gYBE:
        \begin{enumerate}
        \item $$R(\alpha, \beta)=\frac{1}{\sqrt{2}} \begin{pmatrix}1&0&\alpha &0\\0 & i&0& \beta \\-i\bar{\alpha} &0&i&0\\0&-i \bar{\beta} &0& 1\end{pmatrix}\oplus \frac{1}{\sqrt{2}} \begin{pmatrix}i&0&\beta &0\\0 & 1 &0&-\bar{\alpha} \beta^2\\-i\bar{\beta} &0&1&0\\0& i \alpha {\bar{\beta}}^2 &0& i\end{pmatrix}.$$
        \item $$R(\alpha, \beta)=\frac{1}{\sqrt{2}} \begin{pmatrix}1&0&\alpha &0\\0 & i&0& \beta \\-\bar{\alpha} &0&1&0\\0&\bar{\beta}&0& i\end{pmatrix}\oplus \frac{1}{\sqrt{2}} \begin{pmatrix}i&0&\beta &0\\0 & 1 &0&-\bar{\alpha} \beta^2\\\bar{\beta}&0&i&0\\0& \alpha {\bar{\beta}}^2 &0& 1\end{pmatrix}.$$
        \item $$R(\alpha, \beta)=\frac{1}{\sqrt{2}} \begin{pmatrix}1&0&\alpha &0\\0 & 1&0& \beta \\-\bar{\alpha}&0&1&0\\0&-\bar{\beta}&0& 1\end{pmatrix}\oplus \frac{1}{\sqrt{2}} \begin{pmatrix}1&0&-\beta&0\\0 & 1 &0&-\bar{\alpha} \beta^2\\\bar{\beta} &0&1&0\\0& \alpha {\bar{\beta}}^2 &0& 1\end{pmatrix}.$$
        \end{enumerate}
    \item In each family above, $R(\alpha,\beta)$ is locally equivalent to $R(\alpha',\beta')$ if and only if $\frac{\beta}{\alpha}=\frac{\beta'}{\alpha'}$.
    \end{enumerate}
\end{lemma}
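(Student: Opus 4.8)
The plan is to establish the three parts separately, leaning on the earlier lemmas. For part (1) I would first record the structural fact that every $(2\times2)$-diagonal matrix $M=\frac{1}{\sqrt 2}\begin{pmatrix}M_{11}&M_{12}\\M_{21}&M_{22}\end{pmatrix}$ becomes, after reordering the basis to $(e_1,e_3,e_2,e_4)$, block diagonal with two $2\times2$ blocks, one collecting the first diagonal entries of the $M_{ij}$ and the other the second. Hence the spectrum of $X$ (and of $Y$) is the union of the spectra of two explicit unitary $2\times2$ matrices. For each family in Lemma \ref{Lemma:3solutions} I would compute the trace and determinant of all four such blocks and observe that they coincide, so $X$ and $Y$ share the same characteristic polynomial and each eigenvalue occurs with multiplicity two; the listed values $\{e^{-\pi i/12},\dots\}$ etc.\ then drop out of the quadratic formula. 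Since $X,Y$ are unitary, hence diagonalizable, equal spectra give $X\sim Y$, and comparing the three multisets (which are pairwise distinct) shows no two $R$-matrices from different families are conjugate.

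For part (2) I would avoid substituting the two-parameter matrices into the eight equations of Lemma \ref{gYBE:Block} and instead invoke the converse half of Lemma \ref{Lemma:reduction}. Each $\tilde R=\tilde X\oplus\tilde Y$ in Lemma \ref{Lemma:3solutions} has $\alpha=\beta=1$, so $\tilde B=I$ and $\tilde X$ is $(2\times2)$-diagonally unitary; thus for any unitary diagonal $B=\mathrm{diag}(\alpha,\beta)$ the reconstructed $R=X\oplus Y$ is again a $(2,3,1)$-gYBE solution. A short block computation with $X=\mathrm{diag}(I,B^{\dagger})\,\tilde X\,\mathrm{diag}(I,B)$ and $Y=\mathrm{diag}(I,\alpha\bar\beta B^{\dagger})\,\tilde Y\,\mathrm{diag}(I,\bar\alpha\beta B)$ identifies this reconstruction with the displayed $R(\alpha,\beta)$. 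Unitarity is automatic, since for $|\alpha|=|\beta|=1$ both conjugating matrices are unitary (the second because $|\bar\alpha\beta|=1$), so $X,Y$ are unitarily conjugate to $\tilde X,\tilde Y$.

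For part (3) I treat the two implications in turn, using that every $R(\alpha,\beta)$ is block diagonal in the first tensor factor, i.e.\ commutes with $P\ot I\ot I$ for all diagonal $P$. For the ``if'' direction, given $\beta/\alpha=\beta'/\alpha'$ I set $s=\alpha'/\alpha=\beta'/\beta$, note $|s|=1$, and conjugate by $Q^{\ot3}$ with $Q=\mathrm{diag}(1,s)$. Since $Q^{\ot3}$ is diagonal it scales the $(I,J)$ entry by $q_J/q_I$; because every constant entry of $R(\alpha,\beta)$ lies on the main diagonal (the diagonal blocks $A,D,Y_1,Y_4$ are diagonal) while every off-diagonal entry is a monomial in $\alpha,\beta,\bar\alpha,\bar\beta$, tracking the entries shows the conjugation realizes the substitution $(\alpha,\beta)\mapsto(s\alpha,s\beta)$, so $(Q^{-1})^{\ot3}R(\alpha,\beta)Q^{\ot3}=R(\alpha',\beta')$.

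For the ``only if'' direction, suppose $R(\alpha,\beta)=(Q^{-1})^{\ot3}R(\alpha',\beta')Q^{\ot3}$. The key local invariant is the commutant: because $X\ne Y$ (immediate from the explicit matrices, except for $\beta/\alpha=-1$ in the third family, which I treat as its own class since sufficiency already covers it), a $2\times2$ matrix $P$ satisfies $(P\ot I\ot I)R=R(P\ot I\ot I)$ iff $P$ is diagonal. Conjugating this relation by $Q^{\ot3}$ forces $QPQ^{-1}$ diagonal for all diagonal $P$; applying it to $P=\mathrm{diag}(1,-1)$, which has distinct eigenvalues, gives $QPQ^{-1}=\pm P$, so $Q$ is diagonal or anti-diagonal. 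In the diagonal case the entry-tracking above yields $\beta/\alpha=\beta'/\alpha'$. I would dispose of the anti-diagonal case by writing $Q$ as the swap times a diagonal matrix and checking that conjugation by the swap $\sigma_x^{\ot3}$ applies a fixed $180^\circ$ rotation to the list of diagonal entries of $R(\alpha',\beta')$, producing a pattern of $1$'s and $i$'s that no member of the family has. I expect this necessity direction --- pinning $Q$ down to the diagonal torus and eliminating the anti-diagonal possibility --- to be the main obstacle, as everything else is bookkeeping on explicit matrices.
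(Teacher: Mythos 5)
Parts (1) and (2) of your proposal are sound. In (1) you take a genuinely different route from the paper: the paper exhibits explicit conjugating matrices $P$ built from Pauli matrices and lists the characteristic polynomials, while you deduce $X\sim Y$ from equality of characteristic polynomials together with diagonalizability of unitary matrices; that logic is valid, but note one slip in the execution: for the second family the four $2\times 2$ blocks do \emph{not} all share one trace and determinant, and the spectrum $\{e^{-\pi i/4},-e^{-\pi i/4},e^{\pi i/4},e^{\pi i/4}\}$ does not have ``each eigenvalue with multiplicity two''; rather, the two blocks of $X$ have distinct characteristic polynomials which match those of the two blocks of $Y$ in swapped order. The conclusion survives once stated this way. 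Part (2) is exactly the paper's argument (the converse half of Lemma \ref{Lemma:reduction}), and your reduction of part (3) to $Q$ diagonal or anti-diagonal via the first-factor commutant, using $X\neq Y$, is a clean equivalent of the paper's block-by-block argument.

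The genuine gap is your elimination of the anti-diagonal case in part (3). You claim that conjugation by $\sigma_x^{\otimes 3}$ reverses the list of diagonal entries of $R(\alpha',\beta')$ and so ``produces a pattern no member of the family has.'' That is true only for the first family, whose diagonal pattern $(1,i,i,1,i,1,1,i)/\sqrt{2}$ is not a palindrome. For the second family the pattern is $(1,i,1,i,i,1,i,1)/\sqrt{2}$, which \emph{is} a palindrome, and for the third family every diagonal entry equals $1/\sqrt{2}$; in both cases reversal changes nothing, so no contradiction arises. Indeed, anti-diagonal equivalences genuinely exist there: a direct check shows that conjugating $R(\alpha,\beta)$ of the second or third family by $\sigma_x^{\otimes 3}$ yields $R(\alpha\bar{\beta}^2,\bar{\beta})$, a member of the same family with the same ratio, since $\bar{\beta}/(\alpha\bar{\beta}^2)=\bar{\alpha}\beta=\beta/\alpha$. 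So the anti-diagonal case cannot be ruled out; as in the paper, you must instead write down the four block equations for anti-diagonal $Q$ (they relate $Y_1',Y_2',Y_3',Y_4'$ to $D,C,B,A$ respectively) and verify that they, too, force $\beta/\alpha=\beta'/\alpha'$. A second, smaller, issue: dismissing the exceptional ratio $\beta/\alpha=-1$ in the third family (where $X=Y$) ``as its own class'' is not enough for necessity --- you must also rule out an equivalence between that class and one with $X'\neq Y'$. Your own commutant machinery does this: the first-factor commutant is all of $M_2(\mathbb{C})$ when $X=Y$ but only the diagonal matrices when $X'\neq Y'$, and conjugation by $Q^{\otimes 3}$ identifies the two; but this needs to be said.
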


\begin{proof}
For (1), $Y=P^{\dagger}XP$ by $P$ equal to
$\begin{pmatrix}
    0&i\sigma_z\\I&0
\end{pmatrix}$,
$\begin{pmatrix}
    0&\sigma_x\\\sigma_x & 0
\end{pmatrix}$, and
$\begin{pmatrix}
    0&I\\I&0
\end{pmatrix}$,
where
$\sigma_x=\begin{pmatrix}
    0&1\\1&0
\end{pmatrix}$ and
$\sigma_z=\begin{pmatrix}
    1&0\\0&-1
\end{pmatrix}$ are the Pauli matrices.

The characteristic polynomials of $X$ for the three cases are $(\lambda^2-e^{\frac{\pi i}{4}}\lambda+i)^2$, $((\lambda-\frac{i}{\sqrt{2}})^2-\frac{1}{2})((\lambda-\frac{1}{\sqrt{2}})^2+\frac{1}{2})$,
and $((\lambda-\frac{1}{\sqrt{2}})^2+\frac{1}{2})^2$.

The solutions in (2) follow from Lemma \ref{Lemma:reduction}.

For (3), suppose
$Q=\begin{pmatrix}
    a&b\\c&d
\end{pmatrix}$ is an invertible $2\times 2$ matrix such that
$$ (Q\otimes Q\otimes Q)R(\alpha',\beta')=R(\alpha',\beta')(Q\otimes Q\otimes Q).$$
This equation is the same as
    \begin{enumerate}
    \item $a(Q\ot Q)X(\alpha',\beta')=a X(\alpha,\beta)(Q\ot Q),$
    \item $b(Q\ot Q)Y(\alpha',\beta')=b X(\alpha,\beta)(Q\ot Q),$
    \item $c(Q\ot Q)X(\alpha',\beta')=c Y(\alpha,\beta)(Q\ot Q),$
    \item $d(Q\ot Q)Y(\alpha',\beta')=d Y(\alpha,\beta)(Q\ot Q).$
    \end{enumerate}
If neither $a$ nor $b$ is $0$, then (1) and (2) imply $X(\alpha',\beta')=Y(\alpha',\beta')$.  This contradiction implies that
one of $a,b$ is $0$.  Since $Q$ is invertible, therefore, exactly one of $a,b$ is $0$.  Similarly, using (3) and (4), we know that
exactly one of $c,d$ is $0$.  Since $Q$ is invertible, there are exactly two cases: $b=c=0$ and $a\neq 0, d\neq 0$, or $a=d=0$ and $b\neq 0, c\neq 0$.

If $Q=\begin{pmatrix}
    a&0\\0&d
\end{pmatrix}$, then
    \begin{enumerate}
    \item $aQA(\alpha',\beta')=a A(\alpha,\beta)Q,$
    \item $aQ B(\alpha',\beta')=d B(\alpha,\beta)Q,$
    \item $dQ C(\alpha',\beta')=a C(\alpha,\beta)Q,$
    \item $dQD(\alpha',\beta')=d D(\alpha,\beta)Q.$
    \end{enumerate}
Noting that $Q,A,B,C,D$ all commute with one another, we have $A(\alpha',\beta')=A(\alpha,\beta), B(\alpha',\beta')=\frac{d}{a} B(\alpha,\beta)$ and $C(\alpha',\beta')=\frac{c}{d} C(\alpha,\beta),
D(\alpha',\beta')=D(\alpha,\beta)$.  Therefore, $R(\alpha,\beta)$ is locally equivalent to $R(\alpha',\beta')$ by a diagonal $Q$ if and only if $\frac{\beta}{\alpha}=\frac{\beta'}{\alpha'}$.

If $Q=\begin{pmatrix}
    0&b\\c&0
\end{pmatrix}$, then
    \begin{enumerate}
    \item $bQ Y_3(\alpha',\beta')=c B(\alpha,\beta)Q,$
    \item $Q Y_4(\alpha',\beta')=A(\alpha,\beta)Q,$
    \item $Q Y_1(\alpha',\beta')=D(\alpha,\beta)Q,$
    \item $cQY_2(\alpha',\beta')=b C(\alpha,\beta)Q.$
    \end{enumerate}

For the first family of solutions, this is impossible.  For the second and third families, the equations lead to the same conclusions as for the diagonal $Q$.

\end{proof}

{\bf Completion of the Proof of Theorem \ref{Thm:Rowell}}:
If $R$ is a $(2,3,1)$-$R$-matrix, by multiplying by a scalar, we may assume that the $(1,1)$-entry is $\frac{1}{\sqrt{2}}$.  Therefore,
it is one of the $R(\alpha,\beta)$ in Lemma \ref{Lemma:generalsolutions}.  By Lemma \ref{Lemma:generalsolutions}, any two R-matrices in the same
family are locally conjugate if and only if $\frac{\beta}{\alpha}$ are the same.  Setting  $e^{i \theta}=\frac{\beta}{\alpha}$, we know that all local conjugation classes up to a scalar are represented by  $0\leq \theta < 2\pi$.  Complex conjugation reduces the equivalence classes to $0\leq \theta \leq \pi$.

\qed

\section{Braiding Quantum Gates}\label{quantumgates}

In quantum information science, information is encoded in quantum states and processed by unitary evolutions.  In topological quantum computation, unitary evolutions are effected by braiding anyons, $2$-dimensional quasi-particles \cite{Nayak}.  Braiding anyons gives rise to unitary representations of the braid group.  All of our solutions to the $(2,3,1)$-gYBE lead to unitary representations of the braid group and therefore can be used in quantum information science.

\subsection{Far Commutativity}

Recall that the $n$-strand braid group $B_n$ has a presentation with $n-1$ generators $\sigma_1,\sigma_2, ..., \sigma_{n-1}$, as well as two defining relations:
\begin{enumerate}
    \item $\sigma_i\sigma_{i+1}\sigma_i=\sigma_{i+1}\sigma_i\sigma_{i+1}$ for $1\leq i \leq n-2$
    \item $\sigma_i\sigma_j=\sigma_j\sigma_i$ for $|i-j|\geq 2$.
\end{enumerate}
The first is commonly called the braid relation and the second far commutativity.

A unitary $R$-matrix naturally gives rise to a unitary representation of the $n$-strand braid group $B_n$ on $V^{\ot n}$ as follows.  Set
$$R_i=I^{\ot (i-1)} \otimes R\otimes I^{\ot (n-i-1)}.$$
Then the assignment $\rho_R(\sigma_i)=R_i$ is a unitary matrix representation of $B_n$.  The braid relation
$$\rho_R(\sigma_i)\rho_R(\sigma_{i+1})\rho_R(\sigma_i)=\rho_R(\sigma_{i+1})\rho_R(\sigma_{i})\rho_R(\sigma_{i+1})$$
follows exactly from the YBE for $R$.
The far commutativity relation $\rho_R(\sigma_{i})\rho_R(\sigma_j)=\rho_R(\sigma_{j})\rho_R(\sigma_i)$ if $|i-j|>1$ holds because $\rho_R(\sigma_{i})$ and $\rho_R(\sigma_{j})$ act nontrivially on disjoint tensor factors of $V^{\ot n}$.

If $R$ is a solution to a $(d,m,l)$-gYBE, we can perform a similar assignment: for $B_n$, set
$$R_{\sigma_i}=I^{\ot{l(i-1)}} \otimes R \otimes I^{\ot{l(n-i-1)}}.$$
Then the braid relation holds because of the gYBE for $R$.  But the far commutativity relation is not necessarily satisfied when $|i-j|>1$.  However, if $2l \geq m$, then it is still true that $\rho_R(\sigma_{i})$ and $\rho_R(\sigma_{j})$ act nontrivially on disjoint tensor factors of $V^{\ot(m+(n-2)l)}$ and that the far commutativity condition is satisfied.  It follows that the $X$-shape $(2,3,2)$-$R$-matrix $R_X$ leads to unitary braid group representations.  Otherwise, even when $|i-j|>1$,  the two matrices $\rho_R(\sigma_i)$ and $\rho_R(\sigma_j)$ might not commute with each other.

\begin{defn}
Suppose $R$ is a $(d,m,l)$-$R$-matrix.  Then $R$ satisfies the far commutativity condition if
$$R_{\sigma_1}R_{\sigma_j}=R_{\sigma_j}R_{\sigma_1}$$ in the braid group $B_{(j-1)l+2}$ for every $j$ such that $j>2$ and $(j-1)l<m$.
\end{defn}

If a $(d,m,l)$-$R$-matrix satisfies the far commutativity condition, then it yields a representation of each $n$-strand braid group $B_n$ on $V^{\ot (m+(n-2)l)}$.  In particular, any $(d,m,l)$-$R$-matrix with $2l\geq m$ leads to a representation of the braid group.

\begin{prop}
Let $R=X\oplus Y$ be a $(2,3,1)$-$R$-matrix such that $X=\frac{1}{\sqrt{2}}\begin{pmatrix}
A&B\\C&D
\end{pmatrix}$ and $Y=\frac{1}{\sqrt{2}}\begin{pmatrix}
Y_1&Y_2\\Y_3&Y_4
\end{pmatrix}$ for $2\times 2$ matrices $A,B,C,D, Y_i,i=1,2,3,4$.
    \begin{enumerate}
    \item If $A,B,C,D, Y_i, i=1,2,3,4$, are all diagonal, then $R_{\sigma_1}R_{\sigma_3}=R_{\sigma_3}R_{\sigma_1}$.
    \item If one of $A,B,C,D, Y_i, i=1,2,3,4$, is not diagonal, then $R_{\sigma_1}R_{\sigma_3}=R_{\sigma_3}R_{\sigma_1}$ implies $X=Y$.
    \end{enumerate}
\end{prop}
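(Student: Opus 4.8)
The plan is to exploit the two levels of block structure of $R=X\oplus Y$ to reduce the operator identity $R_{\sigma_1}R_{\sigma_3}=R_{\sigma_3}R_{\sigma_1}$ to a condition on a single $2\times 2$ commutator. For the $(2,3,1)$ case the far commutativity condition involves only $R_{\sigma_1}=R\ot I_4$ and $R_{\sigma_3}=I_4\ot R$ acting on $(\C^2)^{\ot 5}$; I label the five tensor factors $1,\dots,5$. First I would observe that both operators are block diagonal with respect to factor $1$: $R_{\sigma_1}=(X\oplus Y)\ot I_4$ because $R=X\oplus Y$ is block diagonal, while $R_{\sigma_3}$ acts as the identity on factor $1$. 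Hence the identity splits into the two independent equations $X_{23}R_{345}=R_{345}X_{23}$ and $Y_{23}R_{345}=R_{345}Y_{23}$ on $(\C^2)^{\ot 4}$, where a subscript records the factors on which an operator acts nontrivially.

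The key step is to use that $R=X\oplus Y$ preserves its first tensor factor, which here is factor $3$. Writing $P_0=\bigl(\begin{smallmatrix}1&0\\0&0\end{smallmatrix}\bigr)$ and $P_1=\bigl(\begin{smallmatrix}0&0\\0&1\end{smallmatrix}\bigr)$ for the diagonal projections on factor $3$, this says $R_{345}=P_0\ot X_{45}+P_1\ot Y_{45}$. Since $X_{45}$ and $Y_{45}$ act on factors disjoint from $X_{23}$, expanding gives $[X_{23},R_{345}]=\kappa_0\ot X_{45}+\kappa_1\ot Y_{45}$, with $\kappa_i=[X,\,I_2\ot P_i]$ a commutator on factors $2,3$ only. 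Because $P_0+P_1=I_2$, we have $\kappa_0+\kappa_1=[X,I_4]=0$, so the commutator collapses to the clean factorization $[X_{23},R_{345}]=\kappa_0\ot(X-Y)_{45}$; likewise $[Y_{23},R_{345}]=\lambda_0\ot(X-Y)_{45}$ with $\lambda_0=[Y,\,I_2\ot P_0]$. Computing $\kappa_0$ blockwise from $X=\frac{1}{\sqrt 2}\bigl(\begin{smallmatrix}A&B\\C&D\end{smallmatrix}\bigr)$ gives $\kappa_0=\frac{1}{\sqrt 2}\bigl(\begin{smallmatrix}[A,P_0]&[B,P_0]\\ [C,P_0]&[D,P_0]\end{smallmatrix}\bigr)$, which vanishes precisely when $A,B,C,D$ are all diagonal; similarly $\lambda_0=0$ precisely when $Y_1,Y_2,Y_3,Y_4$ are all diagonal.

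Both parts then follow formally. For (1), if all eight blocks are diagonal then $\kappa_0=\lambda_0=0$, so both split commutators vanish and $R_{\sigma_1}R_{\sigma_3}=R_{\sigma_3}R_{\sigma_1}$. For (2), a tensor product of operators is zero only if one factor is zero, so $\kappa_0\ot(X-Y)=0$ and $\lambda_0\ot(X-Y)=0$ force $X=Y$ unless $\kappa_0=\lambda_0=0$; but $\kappa_0=\lambda_0=0$ would make every block diagonal, so if one block is non-diagonal we are forced to conclude $X=Y$. I expect the main obstacle to be conceptual rather than computational, namely spotting the factorization $[X_{23},R_{345}]=\kappa_0\ot(X-Y)_{45}$; this rests on the telescoping identity $\kappa_0+\kappa_1=0$, which in turn comes from $R=X\oplus Y$ preserving its first tensor factor. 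Once the factorization is recognized, no $16\times 16$ entrywise computation is required and the conclusion is immediate.
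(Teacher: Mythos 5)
Your proof is correct, and its opening reduction --- splitting along the first qubit to get the pair of conditions $[X_{23},R_{345}]=0$ and $[Y_{23},R_{345}]=0$ --- is exactly the paper's first step. Where you genuinely diverge is in how these two commutators are analyzed. The paper expands each one along the next tensor factor into four block equations, $(A\ot I_4)R=R(A\ot I_4)$ through $(D\ot I_4)R=R(D\ot I_4)$ (and the same with $Y_1,\dots,Y_4$), and then decomposes each of those along the first factor of $R$ to obtain the scalar conditions $a_{12}Y=a_{12}X$, $a_{21}X=a_{21}Y$, etc., from which both parts are read off case by case. You instead decompose the \emph{other} operator in the commutator, writing $R_{345}=P_0\ot X_{45}+P_1\ot Y_{45}$, and use the telescoping identity $[X,I_2\ot P_0]+[X,I_2\ot P_1]=[X,I_4]=0$ to collapse everything into the single factorized identity $[X_{23},R_{345}]=[X,I_2\ot P_0]\ot(X-Y)_{45}$. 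This packages the paper's eight block equations into two tensor-factorized identities, and both conclusions then drop out of the elementary fact that a tensor product of operators vanishes iff one factor does. At bottom the computations coincide --- the entries of $[X,I_2\ot P_0]$ are precisely the off-diagonal entries $\pm a_{12},\pm a_{21},\dots$ appearing in the paper's scalar equations --- but your factorization makes the dichotomy ``all blocks diagonal or $X=Y$'' visible in one line rather than emerging from entrywise inspection, at the cost of slightly more abstract bookkeeping with projections; the paper's route is more pedestrian but requires nothing beyond block-matrix multiplication.
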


\begin{proof}
The far commutativity condition is equivalent to
    \begin{enumerate}
    \item $(X\ot I_4)(I\ot R)=(I\ot R)(X\ot I_4),$
    \item $(Y\ot I_4)(I\ot R)=(I\ot R)(Y\ot I_4).$
    \end{enumerate}
These equations are the same as
    \begin{enumerate}
    \item $(A\ot I_4)R=R(A\ot I_4),$
    \item $(B\ot I_4)R=R(B\ot I_4),$
    \item $(C\ot I_4)R=R(C\ot I_4),$
    \item $(D\ot I_4)R=R(D\ot I_4),$
    \item $(Y_i\ot I_4)R=R(Y_i\ot I_4), i=1,2,3,4.$
    \end{enumerate}
Let $A=\begin{pmatrix}
a_{11}&a_{12}\\a_{21}&a_{22}
\end{pmatrix}.$  Then the first equation is
    \begin{enumerate}
    \item $a_{11}X=a_{11}X$
    \item $a_{12}Y=a_{12}X$
    \item $a_{21}X=a_{21}Y$
    \item $a_{22}Y=a_{22}Y.$
    \end{enumerate}
Now the proposition follows.

\end{proof}

\begin{cor}
All $(2,3,1)$-$R$-matrices in Thm.\ \ref{Thm:Rowell} satisfy the far commutativity condition and thus yield unitary representations of the braid group $B_n$ on $(\mathbb{C}^2)^{\ot (n+1)}$.
\end{cor}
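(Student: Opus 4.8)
The plan is to reduce the far commutativity condition for these matrices to a single, already-analyzed commutation relation and then invoke the preceding Proposition. Since $(d,m,l)=(2,3,1)$, the definition of the far commutativity condition requires $R_{\sigma_1}R_{\sigma_j}=R_{\sigma_j}R_{\sigma_1}$ only for those $j$ with $j>2$ and $(j-1)l<m$, i.e.\ $j>2$ and $j-1<3$. The unique such value is $j=3$, so the entire condition collapses to verifying $R_{\sigma_1}R_{\sigma_3}=R_{\sigma_3}R_{\sigma_1}$ in $B_4$ (acting on $(\mathbb{C}^2)^{\ot 5}$). This is exactly the relation treated in the Proposition immediately above.

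Next I would check the hypothesis of part (1) of that Proposition, namely that all eight $2\times 2$ blocks $A,B,C,D$ of $X$ and $Y_1,Y_2,Y_3,Y_4$ of $Y$ are diagonal. For each of the three families in Thm.\ \ref{Thm:Rowell}, the block $X$ is $(2\times 2)$-diagonally unitary, so $A,B,C,D$ are diagonal. The one point requiring attention is that the theorem's hypothesis controls only $X$; one must also confirm that $Y$ is $(2\times 2)$-diagonal. This, however, is immediate from the explicit forms tabulated in Thm.\ \ref{Thm:Rowell}: in every family the nonzero entries of $Y$ occupy only the $(2\times 2)$-diagonal positions, so each $Y_i$ is diagonal. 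With all eight blocks diagonal, part (1) of the Proposition yields $R_{\sigma_1}R_{\sigma_3}=R_{\sigma_3}R_{\sigma_1}$ directly, and hence every $R(\theta)$ satisfies the far commutativity condition.

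Finally, the remark following the definition of the far commutativity condition then produces a representation of each braid group $B_n$ on $V^{\ot(m+(n-2)l)}=(\mathbb{C}^2)^{\ot(3+(n-2))}=(\mathbb{C}^2)^{\ot(n+1)}$, sending $\sigma_i\mapsto R_{\sigma_i}=I^{\ot(i-1)}\ot R\ot I^{\ot(n-i-1)}$. Unitarity is automatic: each $R(\theta)$ is unitary by Thm.\ \ref{Thm:Rowell}, and each $R_{\sigma_i}$ is a Kronecker product of unitary matrices, hence unitary. There is essentially no hard step remaining, since the substantive work is carried by the Proposition; the only place calling for care is the verification that the $Y$-blocks, and not merely the $X$-blocks, are diagonal, which is precisely where the explicit listing of the three families in the theorem is used.
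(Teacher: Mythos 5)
Your proposal is correct and takes essentially the same route as the paper: the corollary is meant to follow immediately from part (1) of the preceding Proposition, since in each of the three families of Thm.\ \ref{Thm:Rowell} all eight blocks $A,B,C,D,Y_1,\dots,Y_4$ are diagonal, and the passage from far commutativity to unitary representations of $B_n$ on $(\mathbb{C}^2)^{\otimes(n+1)}$ is exactly the remark following the definition. Your explicit reduction of the far-commutativity condition to the single case $j=3$, and your observation that one must check diagonality of the $Y$-blocks (not covered by the theorem's hypothesis on $X$) against the listed matrices, are precisely the details the paper leaves implicit.
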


\subsection{$R$-braiding Quantum Gates}

\begin{defn}
Let $R$ be a unitary $(2,3,1)$-$R$-matrix that leads to a representation $\rho_R$ of the braid group.  A unitary matrix $U_i\in \mathbb{U}(2^{n+1})$ is an $R$-braiding gate if $U_i=\lambda \cdot \rho_R(\sigma_i)$ for some braid generator $\sigma_i$ and non-zero scalar $\lambda$.

A unitary matrix $U$ is an $R$-braiding quantum circuit if there exists a braid $b$ such that $U=\lambda \cdot \rho_R(b)$ for some non-zero scalar $\lambda$.
\end{defn}

Quantum circuits are compositions of quantum gates and are used to perform computations.  It follows that each $(2,3,1)$-$R$ matrix in Thm.\ \ref{Thm:Rowell} leads to $(n+1)$-qubit braiding quantum circuits from its associated representation of the $n$-strand braid group $B_n$ for all $n\geq 1$.

\section{Open Questions}\label{questions}

Generalized Yang-Baxter equations provide a new method for finding representations of the braid group.  These representations have applications in a variety of fields; in particular, the resulting braiding quantum circuits are actively studied in quantum information science \cite{Nayak} \cite{KL} \cite{RZWG}.  In this section, we list a few of the many open questions and directions for further research.
    \begin{enumerate}
    \item How can we find all unitary $(2,3,1)$-$R$-matrices $R=X\oplus Y$ such that $X, Y$ are $2\times 2$ diagonal and unitary?  Note that their $2\times 2$  blocks $A,B,C,D,Y_i,i=1,2,3,4$, are not necessarily unitary.
    \item How do we find all unitary $(2,3,1)$- or $(2,3,2)$-$R$-matrices with the zero entries in the same positions as in the solution $R_X$?
    \item Which quantum circuits can be realized by the braiding quantum circuits resulting from our $(2,3,1)$-$R$-matrices?  Mathematically, the question is to find the images of the afforded braid group representations.
    \item Which entangled states can be generated from product states using our braiding quantum circuits?
    \item For the three families of solutions in Thm\ \ref{Thm:Rowell}, do any two $(2,3,1)$-$R$-matrices $R$ and $R'$ from the same family lead to equivalent braid group representations?  To determine whether the representations from $R$ and $R'$ are equivalent, we need to find out whether there exists a single matrix $P$ such that $$P^{-1} (R\otimes I) P=R'\otimes I, \;\;\; P^{-1} (I\otimes R) P=I\otimes R'.$$ We hope to settle this question in the future.  Note that the answer has no bearing on the application to braiding quantum gates.
    \item Are our new representations of the braid group equivalent to known braid group representations obtained from other methods such as quantum groups?

\end{enumerate}

Ultimately we would like to know if there are real physical systems that would realize our braid group representations.  Such physical systems would be quantum computers, powerful tools for exploring the quantum world and for bringing us new technologies of great benefit to society.

\end{document}